\title{Quantitative propagation of smallness and spectral estimates  for the Schrödinger operator}
\author{
Kévin Le Balc'h\footnote{Inria, Sorbonne Université, Université de Paris, CNRS, Laboratoire Jacques-Louis Lions, Paris, France. {\tt kevin.le-balc-h@inria.fr}}
\and
Jérémy Martin \footnote{Inria, Sorbonne Université, Université de Paris, CNRS, Laboratoire Jacques-Louis Lions, Paris, France. {\tt jeremy.a.martin@inria.fr.}
}}
\date{\today}
\Crefname{paragraph}{Section}{Sections}
\numberwithin{equation}{section}
\newtheorem{theorem}{Theorem}[section]
\newtheorem{definition}[theorem]{Definition}
\newtheorem{lemma}[theorem]{Lemma}
\newtheorem{proposition}[theorem]{Proposition}
\numberwithin{equation}{section}
\newcommand{\norme}[1]{\left\lVert#1\right\rVert}
\newcommand{\ensemblenombre}[1]{\mathbb{#1}}
\newcommand{\Z}{\ensemblenombre{Z}}
\newcommand{\R}{} 
\renewcommand{\R}{\ensemblenombre{R}}
\newcommand{\rr}{\mathbb{R}}
\newcommand{\nn}{\mathbb{N}}
\def\un{{\mathrm{1~\hspace{-1.4ex}l}}}
\newcommand{\divergence}{\mathrm{div}}
\newcommand\bna{\begin{eqnarray*}}
\newcommand\ena{\end{eqnarray*}}
\newcommand\bnan{\begin{eqnarray}}
\newcommand\enan{\end{eqnarray}}
\begin{document}

\maketitle

\begin{abstract}
In this paper, we investigate quantitative propagation of smallness properties for the Schrödinger operator on a bounded domain in $\R^d$. We extend Logunov, Malinnikova's results concerning propagation of smallness for $A$-harmonic functions to solutions of divergence elliptic equations perturbed by a bounded zero order term. We also prove similar results for gradient of solutions to some particular equations. This latter result enables us to follow the recent strategy of Burq, Moyano for the obtaining of spectral estimates on rough sets for the Schrödinger operator. Applications to observability estimates and to the null-controllability of associated parabolic equations posed on compact manifolds or the whole euclidean space are then considered.
\end{abstract}

\noindent {\bf Keywords:} Propagation of smallness, Schrödinger equations,  Spectral inequality.

\noindent {\bf 2020 Mathematics Subject Classification.}  35J10, 35P99, 47A11, 93B05.

 \small
\tableofcontents
\normalsize

\section{Introduction}

This paper contains several quantitative results on propagation of smallness for solutions of elliptic partial differential equations and their applications to spectral estimates.

More precisely, our attempt is to derive three spheres theorems for solutions and their gradients to divergence elliptic equations perturbed by a bounded zero order term, i.e. Schrödinger type equations
\begin{equation}
\label{eq:schrodingerequationIntro}
-\divergence(A(x) \nabla u)+ V(x) u = 0,\qquad x \in \Omega,
\end{equation}
where $\Omega$ is a smooth bounded open connected set of $\R^d$, with $d\geq 1$, $A=A(x)$ is a symmetric uniformly elliptic matrix with Lipschitz entries and $V=V(x)$ is a bounded real-valued function. From Carleman estimates, it is by now classical that the following interpolation inequality holds. For $B \subset \mathcal K \subset \subset \Omega$ with $B$ open and $\mathcal K $ compact, there exist $C>0$ and $\alpha \in (0,1)$ such that for every solution $u$ to \eqref{eq:schrodingerequationIntro}, we have
\begin{equation}
    \label{eq:threesphereB}
    \sup_{\mathcal K } |u| \leq C (\sup_{B} |u|)^{\alpha} (\sup_{\Omega} |u|)^{1-\alpha}.
\end{equation}
See for instance \cite[Theorem 5.1]{LRL12} or \cite[Corollary 2.3]{LM20} and the references therein.

In \cite{LM18}, in relation to the applications of the Yau's conjecture on the volume of the nodal sets for Laplace eigenfunctions (see \cite{Log18a} and \cite{Log18b}), generalization of the three spheres theorems \eqref{eq:threesphereB} for wild sets for solutions $u$ to 
\begin{equation}
\label{eq:schrodingerequationV0Intro}
-\divergence(A(x) \nabla u) = 0,\qquad x \in \Omega,
\end{equation}
was considered. More precisely, Logunov and Malinnikova obtain in \cite[Theorem 2.1]{LM18} that, given $E \subset \mathcal K \subset \subset \Omega$ with $E$ of positive $d$-dimensional Lebesgue measure, there exist $C>0$ and $\alpha \in (0,1)$ such that for every solution $u$ to \eqref{eq:schrodingerequationV0Intro}, we have
\begin{equation}
    \label{eq:threesphereBWild}
    \sup_{\mathcal{K}} |u| \leq C (\sup_{E} |u|)^{\alpha} (\sup_{\Omega} |u|)^{1-\alpha}.
\end{equation}
One can even assume that $E$ has positive $(d-1+\delta)$-Hausdorff
content for every $\delta >0$. Note that this latter result is sharp in the sense that zeros of harmonic functions in $\R^d$ for $d \geq 2$ may have positive $(d-1)$-Hausdorff content. The propagation of smallness for gradients from sets of positive $(d-1-\delta)$-Hausdorff content for some (small) $\delta$ was also obtained in \cite[Theorem 5.1]{LM18}. As the zeros of $|\nabla u|$ was shown in \cite{NV17} to have finite $(d-2)$-Hausdorff measure, it was conjectured in \cite{LM18} that the result for $|\nabla u|$ should be expected to hold from sets of positive $(d-2+\delta)$-Hausdorff content for any $\delta >0$. Up to now, this conjecture is still open.

The first goal of this paper is then to extend Logunov, Malinnikova's results to the Schrödinger type equation \eqref{eq:schrodingerequationIntro}. Propagation of smallness for solutions are obtained in full generality in the same setting of \cite{LM18}. On the other hand, propagation of smallness for gradients are only derived in a particular setting. Indeed, one cannot expect to derive propagation of smallness for gradients of solutions to \eqref{eq:schrodingerequationIntro} in full generality because as noted in \cite[Remark p. 362]{HHOHON99}, every closed subset of $\R^d$ could be the critical set of such a function so there is no hope to propagate smallness from sets of $(d-1-\delta)$-Hausdorff contents, even for small $\delta>0$. Nevertheless, our particular result is sufficient for the applications to spectral estimates that we next describe.\medskip

Let $M$ be a compact connected Riemannian manifold of dimension $d$, possibly with boundary, equipped with a Riemannian metric $g$. We assume that $M$ is a $\mathcal C^1 \cap W^{2,\infty}$, in the sense that the changes of charts are $\mathcal C^1$ with Lipschitz derivatives. We consider the following associated elliptic operator
\begin{equation}
\label{eq:defHavM}
    H_{g, V} u =-\Delta_g u + V(x) u,\qquad x \in M,
\end{equation}
where $g$ is assumed to be a Lipschitz positive definite metric, $\Delta_g= \divergence_g \circ \nabla_g$ is the Laplace-Beltrami operator and $V=V(x)$ is a bounded real-valued function. The operator $H_{g, V}$ eventually completed with Dirichlet or Neumann boundary conditions, is an unbounded self-adjoint with compact resolvent operator in $L^2(M)$. Consequently, it admits an orthonormal basis in $L^2(M)$ of eigenfunctions denoted by $(\phi_k)_{k \geq 1}$, associated to the sequence of real eigenvalues $(\lambda_k)_{k \geq 1}$ which satisfies $\lambda_k \underset{k \to +\infty}{\longrightarrow}+\infty$ and $\lambda_k \geq -\|V\|_{L^{\infty}}$, for all $k \in \nn$. Given $\Lambda >0$, we introduce the spectral projector $\Pi_{\Lambda}$ as follows
\begin{equation}
    \Pi_{\Lambda} u = \sum_{\lambda_k \leq \Lambda} \langle u, \phi_k\rangle_{L^2} \phi_k\qquad \forall u   \in L^2(M).
\end{equation}
Given a nonempty open subset $\omega$ of $M$, Jerison and Lebeau obtain in \cite{JL99} through Carleman estimates the following spectral inequality
\begin{equation}
\label{eq:PiLambdaSpectralIntro}
\norme{\Pi_{\Lambda} u}_{L^2(M)} \leq C e^{C \sqrt{\Lambda}} \norme{\Pi_{\Lambda} u}_{L^2(\omega)}\qquad \forall u \in L^2(M).
\end{equation}
This type of estimate \eqref{eq:PiLambdaSpectralIntro} is a generalization to linear combination of eigenfunctions of the well-known doubling inequality of Donnelly, Fefferman \cite{DF88} valid for one eigenfunction. Moreover, \eqref{eq:PiLambdaSpectralIntro} combining with the so-called Lebeau-Robbiano method \cite{LR95} leads to the small-time null-controllability of the associated parabolic equations with a control localized in $\omega$. In \cite{AEWZ14} (see also \cite{AE13}), Apraiz, Escauriaza, Wang and Zhang generalize  \eqref{eq:PiLambdaSpectralIntro} to $\omega$ of positive $d$-dimensional Lebesgue measure by assuming that $g$ and $V$ are analytic. In the very recent work \cite{BM23}, Burq and Moyano withdraw the analyticity on the metric $g$, replacing it by the sharp Lipschitz assumption,  but assuming $V=0$, and obtain \eqref{eq:PiLambdaSpectralIntro} for $\omega$ of positive $(d-\delta)$-Hausdorff content thanks to the new propagation of smallness results from \cite{LM18}.

The second goal of the paper is then to follow Burq, Moyano's strategy starting from our new propagation of smallness results for gradients of solutions of \eqref{eq:schrodingerequationIntro} to get new spectral estimates for the Schrödinger operator \eqref{eq:defHavM} in the compact setting.\medskip

On the Euclidean space, we are interesting in the following Schrödinger operator \begin{equation}
\label{eq:defHavRdIntro}
    H_{g, V, \kappa} u =- \frac{1}{\kappa(x)} \divergence(g^{-1}(x) \kappa(x) \nabla u) + V(x) u,\qquad x \in \R^d,
\end{equation}
where $g=g(x)$ is a symmetric uniformly elliptic matrix with Lipschitz entries, $\kappa=\kappa(x)$ is a positive bounded Lipschitz function and $V=V(x)$ is a bounded real-valued function. Notice that $H_{g,V, \kappa}$ is an unbounded self-adjoint operator on $L^2(\R^d, \kappa dx)$. As a consequence, one can still definite spectral projectors by 
\begin{equation}
    \label{eq:defspectralRdIntro}
    \Pi_{\Lambda} u = 1_{H_{g,V, \kappa}} u = \int_{-\|V\|_{\infty}}^{\Lambda} d m\qquad \forall u \in L^2(\R^d, \kappa dx),
\end{equation}
where $d m$ is the spectral measure of $H_{g,V, \kappa}$. Contrary to the case of compact manifolds, spectral inequalities of the form
\begin{equation}\label{eq:abstract_spectral_inequality}
\forall \Lambda >0,\ \exists C_{\Lambda}>0,\ \forall u \in L^2(\rr^d),\quad \|\Pi_{\Lambda} u\|_{L^2(\rr^d)} \leq C_{\Lambda} \|\Pi_{\Lambda}u\|_{L^2(\omega)},
\end{equation}
may require some geometric condition on $\omega$ to hold. When $g= I_d$, $\kappa=1$ and $V=0$, the Logvinenko-Sereda theorem \cite{LS74} shows that \eqref{eq:abstract_spectral_inequality} holds if and only if the measurable subset $\omega$ is thick. 
We say that $\omega$ is a thick subset of $\R^d$, if there exist $R, \gamma >0$ such that 
\begin{equation}
    \label{eq:measurethickIntro}
|\omega \cap B(x,R)| \geq \gamma |B(x,R)|, \qquad \forall x \in \R^d.
\end{equation}
Under this assumption and still in the case where $g=I_d$, $\kappa=1$ and $V=0$,  Kovrijkine establishes in \cite{Kov01} a quantitative version of the Logvinenko-Sereda theorem and shows the following inequality: for every $\Lambda >0$, 
\begin{equation}
\label{eq:measurableRdIntro}
   \norme{\Pi_{\Lambda} u}_{L^{2}(\R^d)} \leq \left(\frac{C_d}{\gamma}\right)^{C_d(1+R \sqrt{\Lambda})} \norme{ \Pi_{\Lambda} u }_{L^2(\omega)}\qquad \forall u \in L^2(\R^d),
\end{equation}
where $C_d>0$ is a positive constant depending only on the dimension.
Thanks to the estimate \eqref{eq:measurableRdIntro}, Egidi and Veselic \cite{EV18} and Wang, Wang, Zhang and Zhang \cite{WWZZ19} established that \eqref{eq:measurethickIntro} is actually a necessary and sufficient condition for the null-controllability of the associated parabolic equation. These results were generalized by Lebeau and Moyano in \cite{LM19} under analyticity assumption on $g$, $V$ and $\kappa$. Very recently, \cite{BM21} extended these results to the case of a Lipschitz metric $g$, a Lipschitz density $\kappa$ but without potential ($V=0$), again starting from propagation of smallness results from Logunov, Malinnikova. They were even able to deduce some spectral estimates under weaker assumptions on $\omega$ which allow it to be of Lebesgue measure zero.

The third goal of the paper is then to follow Burq, Moyano's strategy starting from our new propagation of smallness results for \eqref{eq:schrodingerequationIntro} to get new spectral estimates for the Schrödinger operator \eqref{eq:defHavRdIntro} in the non-compact setting.\medskip

\textbf{Acknowledgements}: Both authors are partially supported by the Project TRECOS ANR-20-CE40-0009 funded by the ANR (2021--2024).

\section{Main results}

The goal of this part is to state the main results of the paper that are quantitative propagation of smallness results for solutions to Schrödinger type equations in a bounded domain of $\R^d$ and their applications to spectral estimates for Schrödinger operators on compact Riemannian manifolds and on the whole euclidean space.\medskip

Recall that for $d \geq 0$, the $d$-Hausdorff content (or measure) of a set $E\subset \mathbb{R}^n$ is 
$$ \mathcal{C}_{\mathcal{H}}^d (E) = \inf \left\{ \sum_j r_j ^d\ ;\ E \subset \bigcup_j B(x_j, r_j)\right\},$$
and the Hausdorff dimension of $E$ is defined as 
$$ \text{dim}_\mathcal{H} (E) = \inf \{d \geq 0\ ;\    \mathcal{C}_{\mathcal{H}}^d(E) =0 \}.$$
We shall denote by $|E|$ the Lebesgue measure of the set $E$. Let us recall that the Hausdorff content of order $d$ is equivalent to the Lebesgue measure, 
$$ \exists C_d, c_d>0,\  \forall A \text{ borelian set},\  c_d |A| \leq \mathcal{C}_{\mathcal{H}} (A) \leq C_d |A|,$$
and  
\begin{equation}
\label{eq:equivalencehaussdorff}
  \mathcal{C}_{\mathcal{H}}^d (E)>0 \Rightarrow  \forall d'\in(0,d),\  \mathcal{C}_{\mathcal{H}}^{d'} (E)\geq \inf( 1,  \mathcal{C}_{\mathcal{H}}^d (E)).
  \end{equation}

\subsection{Propagation of smallness for the Schrödinger operator}

Let $\Omega$ be a bounded domain of $\R^d$. Let us consider the second order elliptic operator
\begin{equation}
\label{eq:defHavOmega}
    H_{A, V} u =- \divergence(A(x) \nabla u) + V(x) u,\qquad x \in \Omega,
\end{equation}
where $A=(a_{ij}(x))_{1, \leq i, j \leq d}$ is a symmetric uniformly elliptic matrix with Lipschitz entries
\begin{equation}
\label{eq:LipschitzAOmega}
    \Lambda_{1}^{-1} |\xi|^2 \leq \langle A(x) \xi, \xi \rangle \leq  \Lambda_{1} |\xi|^2,\quad |a_{ij}(x) - a_{ij}(y)| \leq \Lambda_2 |x-y|,\qquad x, y \in \Omega,\ \xi \in \R^d,
\end{equation}
for some $\Lambda_1, \Lambda_2>0$, where $|x|$ denotes the Euclidean norm of $x \in \rr^d$, and $V = V(x)$ is a real-valued bounded function, i.e. 
\begin{equation}
    \label{eq:hypothesisVOmega}
    V \in L^{\infty}(\Omega;\R).
\end{equation}

The first main result is about propagation of smallness for solutions to Schrödinger type equations.
\begin{theorem}
\label{cor:propagationsmallnessschrodinger}
Let $\rho, m, \delta >0$ and $\mathcal K, E \subset \Omega$, be measurable subsets such that 
\begin{equation}
    \label{eq:distanceandhaussdorffV}
    \text{dist}( \mathcal K, \partial \Omega) \geq \rho,\quad \text{dist}(E, \partial \Omega) \geq \rho,\quad \text{and} \quad  \mathcal C^{d-1+\delta}_{\mathcal H}(E) \geq m.
\end{equation}
There exist $C=C(\Omega,  \Lambda_1, \Lambda_2, \|V\|_{\infty}, \rho, m, \delta)>0$ and $ \alpha=\alpha(\Omega,  \Lambda_1, \Lambda_2, \|V\|_{\infty}, \rho, m, \delta) \in (0,1)$ such that for every weak solution $u \in W^{1,2}(\Omega) \cap L^{\infty}(\Omega)$ of the elliptic equation
\begin{equation}
\label{eq:elliptic_divergence_formV}
    - \divergence (A(x) \nabla u) + V(x) u=0 \ \text{in} \ \Omega,
\end{equation}
we have
\begin{equation}
\label{eq:PropagationSmallness_Divform_SolutionsV}
    \sup_{\mathcal K} | u| \leq C (\sup_{E} | u|)^{\alpha} (\sup_{\Omega} | u|)^{1-\alpha}.
\end{equation}
\end{theorem}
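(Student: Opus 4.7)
The plan is to reduce the statement to the case $V \equiv 0$ already treated by Logunov and Malinnikova in \cite[Theorem 2.1]{LM18}, via a quotient trick. If I can produce a positive Lipschitz auxiliary function $w$ satisfying the same Schrödinger equation $-\divergence(A\nabla w) + V w = 0$ on a neighborhood of $\mathcal K \cup E$, with uniform two-sided bounds $0 < c_0 \le w \le C_0$, then a direct computation using the symmetry of $A$ and both equations gives
$$ -\divergence(w^2 A \nabla v) = 0, \qquad v := u/w. $$
The new diffusion matrix $w^2 A$ remains symmetric, uniformly elliptic, and Lipschitz, with structure constants depending only on $\Lambda_1, \Lambda_2$ and the bounds on $w$. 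Applying \cite[Theorem 2.1]{LM18} to $v$ with the same sets $\mathcal K, E$ and translating back to $u = vw$ through the two-sided bounds on $w$ delivers exactly the interpolation inequality \eqref{eq:PropagationSmallness_Divform_SolutionsV}.

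The serious step is constructing such a $w$. Since the operator $-\divergence(A\nabla\cdot) + V$ may have non-positive principal Dirichlet eigenvalue on $\Omega$, a global positive solution of the homogeneous equation need not exist, so I would first lift the problem to one extra dimension to restore coercivity. For $K > \sqrt{\|V\|_\infty}$, set $\tilde u(x, t) := u(x)\cosh(Kt)$ on $\tilde\Omega := \Omega \times (-T, T)$; a short computation shows
$$ -\divergence_{x,t}\bigl(\tilde A \nabla_{x,t} \tilde u\bigr) + (V(x) + K^2)\,\tilde u = 0, \qquad \tilde A := \begin{pmatrix} A & 0 \\ 0 & 1 \end{pmatrix}, $$
with the shifted potential $V + K^2$ now uniformly bounded below by a positive constant. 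The Dirichlet problem for this coercive operator on $\tilde\Omega$ with boundary data $\equiv 1$ admits, by Lax--Milgram and the weak maximum principle, a bounded solution $\tilde w$ with $0 < \tilde w \le 1$; Harnack's inequality yields the uniform lower bound $\tilde w \ge c_0 > 0$ on the $t$-thickening of $\mathcal K \cup E$, and De Giorgi--Nash--Moser/Schauder estimates provide its Lipschitz regularity. Applying the quotient trick to $\tilde v := \tilde u/\tilde w$ then gives $-\divergence(\tilde w^2 \tilde A \nabla \tilde v) = 0$, and I would invoke \cite[Theorem 2.1]{LM18} in dimension $d+1$ on the lifted set $E \times (-T/2, T/2)$, whose $(d+\delta)$-Hausdorff content is bounded below by $T \cdot \mathcal C^{d-1+\delta}_{\mathcal H}(E) \ge Tm$ by the product structure.

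The main obstacle is the bookkeeping of constants: I would need to verify that the bounds $c_0, C_0$ on $\tilde w$, the Lipschitz seminorm of $\tilde w^2 \tilde A$, and the $(d+\delta)$-content of $E \times (-T/2, T/2)$ depend only on the parameters $\Omega, \Lambda_1, \Lambda_2, \|V\|_\infty, \rho, m, \delta$ permitted in the statement. Once this quantitative dependence is tracked, transferring the three-sets inequality for $\tilde v$ back to $u$ is elementary, using $u(x) = \tilde u(x, 0)$ and $|\tilde u(x, t)| \le \cosh(KT)|u(x)|$ together with the two-sided bounds on $\tilde w$; this produces the Hölder-type interpolation \eqref{eq:PropagationSmallness_Divform_SolutionsV} with constants of the stated form.
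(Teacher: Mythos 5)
Your overall strategy coincides with the paper's: construct a positive auxiliary solution $w$ of the same Schrödinger equation, observe that $v := u/w$ satisfies $-\divergence(w^2 A\nabla v) = 0$, and then invoke \cite[Theorem 2.1]{LM18} for the resulting divergence-form equation, translating back through the two-sided bounds on $w$. The only organizational difference is that you lift to dimension $d+1$ once (via $\cosh(Kt)$) and construct the multiplier on $\tilde\Omega = \Omega\times(-T,T)$, whereas the paper first reduces to $V\geq 0$ by the same type of lift and then builds the multiplier $\phi$ directly in dimension $d$ (Lemma~\ref{lem:auxiliary_fonction_for_elliptic_equation}); these routes are functionally equivalent, and in both cases the final application of \cite{LM18} is in the lifted dimension, with the product estimate on Hausdorff content you mention appearing implicitly in the paper's reduction step.

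The one place where your argument as written has a real gap is the quantitative lower bound $\tilde w \geq c_0 > 0$. You ascribe it to Harnack's inequality, but Harnack by itself only controls the ratio $\sup_K \tilde w / \inf_K \tilde w$ on interior compacta; it yields no absolute lower bound without some anchor such as $\sup_K \tilde w \geq c$, and one cannot safely chain Harnack up to the boundary since $\Omega$ is merely a bounded domain with no assumed boundary regularity (so $\tilde w = 1$ on $\partial\Omega$ is only a trace statement). The paper sidesteps this with an explicit barrier: for a suitable $\lambda>0$ depending only on $\Lambda_1,\Lambda_2,\|V\|_\infty$, the function $\phi_-(x) = \exp(-\lambda(x_1-x_0))$ is a subsolution of the same equation lying below $1$ on $\partial\Omega$, so the weak comparison principle gives $\tilde w \geq \phi_-$, and $\phi_-$ is uniformly positive on any compactly contained subdomain. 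Replacing your Harnack step by this comparison argument (or supplying the missing anchor, for instance a quantitative lower bound on $\|\tilde w\|_{L^1}$ obtained by testing the equation against $\tilde w - 1$) closes the gap; with that change the rest of your proof tracks the paper's and yields~\eqref{eq:PropagationSmallness_Divform_SolutionsV} with constants of the required form. A secondary remark: for the Lipschitz regularity of $\tilde w$ the paper uses interior $W^{2,p}$ estimates plus Sobolev embedding rather than Schauder, which is marginally cleaner given that the coefficients are only Lipschitz.
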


Let $\kappa=\kappa(x) \in W^{1, \infty}(\Omega)$ satisfying
\begin{equation}
\label{eq:LipschitzkappaOmega}
    \Lambda_{1}^{-1} \leq \kappa(x) \leq  \Lambda_{1} \quad \text{and} \quad |\kappa(x) - \kappa(y)| \leq \Lambda_2|x-y|, \qquad \forall x, y \in \rr^d.
\end{equation}

The second main result is about propagation of smallness of gradient of particular solutions to Schrödinger type equations.
\begin{theorem}
\label{cor:PropagationSmallness_withPotential}
There exists $\delta_d \in (0,1)$ depending only on the dimension $d$ such that the following holds. Let $\rho, m >0$, $\delta \in [0,\delta_d]$ and $\mathcal K, E \subset \Omega$ be measurable subsets such that 
\begin{equation}
    \label{eq:distanceandhaussdorffgradientV}
    \text{dist}( \mathcal K, \partial \Omega) \geq \rho,\quad \text{dist}(E, \partial \Omega) \geq \rho \quad \text{and} \quad  \mathcal C^{d-\delta}_{\mathcal H}(E) \geq m.
\end{equation}
There exist $C=C(\Omega,  \Lambda_1, \Lambda_2, \|V\|_{\infty}, \rho, m, \delta)>0$ and $ \alpha=\alpha(\Omega,  \Lambda_1, \Lambda_2, \|V\|_{\infty}, \rho, m, \delta) \in (0,1)$ such that for every weak solution $\hat{u}(x,t) \in W^{1,2}(\Omega \times (-1,1)) \cap L^{\infty}(\Omega \times (-1,1))$ of the elliptic equation
\begin{equation}
	\label{eq:EllipticEquation_NonDivForm}
		\left\{
			\begin{array}{ll}
				  - \divergence_x \cdot (A(x) \nabla_x \hat{u})- \kappa(x) \partial_{tt} \hat{u}+ V(x) \hat{u}=0 & \text{ in }  \Omega \times (-1,1), \\
				\hat{u}(x,0) = 0 & \text{ in } \Omega,
			\end{array}
		\right.
\end{equation}
we have
\begin{equation}
    \label{eq:PropagationSmallness_withPotential}
 \sup_{x \in \mathcal K} |\partial_t \hat{u}(x,0)|
     \leq C \Big(\sup_{x \in E} |\partial_t \hat{u}(x,0)|\Big)^\alpha \left\|\hat u\right\|^{1-\alpha}_{W^{1, \infty}(\Omega\times(-1,1))}.
\end{equation}
\end{theorem}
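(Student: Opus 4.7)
The strategy is to apply Theorem~\ref{cor:propagationsmallnessschrodinger} in dimension $d+1$ to the function $v:=\partial_t \hat u$. Rewrite \eqref{eq:EllipticEquation_NonDivForm} as a divergence-form Schrödinger equation on $\Omega\times(-1,1)\subset\R^{d+1}$: with $\tilde A(x):=\mathrm{diag}(A(x),\kappa(x))$, a $(d+1)\times(d+1)$ symmetric uniformly elliptic matrix with Lipschitz entries (constant in $t$), the equation reads $-\mathrm{div}_{x,t}(\tilde A\nabla_{x,t}\hat u)+V\hat u=0$. Since $\tilde A$ and $V$ are $t$-independent, $v$ solves the very same elliptic equation on $\Omega\times(-1,1)$.

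The key use of $\hat u(\cdot,0)=0$ is to gain one additional order of vanishing at $t=0$: differentiating the identity in $x$ yields $\nabla_x \hat u(\cdot,0)=0$ and $\mathrm{div}_x(A\nabla_x\hat u)(\cdot,0)=0$ in the distributional sense, so evaluating the PDE at $t=0$ forces $\kappa(x)\partial_{tt}\hat u(x,0)=0$, hence $\partial_{tt}\hat u(\cdot,0)=0$ on $\Omega$. Combined with interior $C^{2,\alpha}$ regularity for the $(d+1)$-dimensional equation (valid since $\tilde A$ is Lipschitz and $V\in L^\infty$), we obtain, for $x$ at distance $\geq\rho$ from $\partial\Omega$ and $|s|\leq \rho/2$,
\begin{equation*}
|\partial_{tt}\hat u(x,s)|\leq CM|s|^{\alpha}, \qquad |v(x,s)-\phi(x)|\leq CM|s|^{1+\alpha},
\end{equation*}
where $\phi(x):=\partial_t\hat u(x,0)$ and $M:=\|\hat u\|_{W^{1,\infty}(\Omega\times(-1,1))}$.

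Set $a:=\sup_E|\phi|$, fix $\tau\in(0,\rho/2)$, and introduce the thickening $\tilde E:=E\times[-\tau,\tau]\subset\Omega\times(-1,1)$ together with $\tilde{\mathcal K}:=\mathcal K\times\{0\}$; both lie at distance $\geq\rho/2$ from $\partial(\Omega\times(-1,1))$. A standard product inequality for Hausdorff content gives $\mathcal C^{d+1-\delta}_{\mathcal H}(\tilde E)\geq c\tau m$, and the assumption $\delta\leq\delta_d<1$ makes the exponent $(d+1)-1+(1-\delta)=d+1-\delta$ admissible for Theorem~\ref{cor:propagationsmallnessschrodinger} applied in $\R^{d+1}$ to $v$. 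Combined with the Taylor estimate $\sup_{\tilde E}|v|\leq a+CM\tau^{1+\alpha}$, that theorem yields
\begin{equation*}
b:=\sup_{\mathcal K}|\phi|=\sup_{\tilde{\mathcal K}}|v|\leq C'\bigl(a+CM\tau^{1+\alpha}\bigr)^{\alpha'}M^{1-\alpha'},
\end{equation*}
for some $\alpha',C'$ depending on the data and on $\tau$ through the Hausdorff content $\sim\tau m$ of $\tilde E$.

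The hard part is the final optimization in $\tau$. The natural balanced choice $\tau\asymp(a/M)^{1/(1+\alpha)}$ absorbs the error term and gives $b\leq C'' a^{\alpha'}M^{1-\alpha'}$, but introduces a dependence of $\alpha'$ and $C'$ on $a/M$ via the Hausdorff content of $\tilde E$. Closing the argument therefore requires a careful tracking of how the three-ball exponent in Theorem~\ref{cor:propagationsmallnessschrodinger} depends on the Hausdorff content hypothesis, or alternatively a two-scale argument: apply the inequality at a fixed $\tau_0$ in the regime $a\gtrsim M\tau_0^{1+\alpha}$, and in the complementary regime $a\lesssim M\tau_0^{1+\alpha}$ combine the trivial bound $b\leq M$ with the constraint $a/M\lesssim\tau_0^{1+\alpha}$. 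The admissible range $\delta\in[0,\delta_d]$ reflects precisely what this optimization is able to absorb.
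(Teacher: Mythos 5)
Your approach is genuinely different from the paper's and, as you yourself flag, it has a gap that the sketch does not close. The paper never thickens $E$: it applies the \emph{gradient} propagation result \cite[Theorem 5.1]{LM18} (Theorem~\ref{thm:LM18_propagation_smallness_Gradients}) to $\hat v=\hat u/\phi$ in $\Omega_0\times(-1,1)$, taking $\hat E=E\times\{0\}$ and $\hat{\mathcal K}=\mathcal K\times\{0\}$. Because that theorem only requires Hausdorff content of order $(d+1)-1-\delta=d-\delta$, the flat slice $E\times\{0\}$ is already admissible with $\mathcal C^{d-\delta}_{\mathcal H}\geq m$, and since $\hat v(\cdot,0)=0$ forces $\nabla_x\hat v(\cdot,0)=0$, the full gradient $|\nabla_{x,t}\hat v(\cdot,0)|$ collapses to $|\partial_t\hat v(\cdot,0)|$. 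No optimization in a thickening parameter is needed, and the smallness of $\delta_d$ is exactly the $\delta_d$ of \cite[Theorem 5.1]{LM18}.

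Your route, applying Theorem~\ref{cor:propagationsmallnessschrodinger} (propagation for \emph{solutions}) to $v=\partial_t\hat u$, forces a thickening $\tilde E=E\times[-\tau,\tau]$ because the solutions theorem in dimension $d+1$ requires Hausdorff content of order strictly bigger than $d$, and this is where the argument breaks. The constants $C$ and $\alpha$ produced by Theorem~\ref{cor:propagationsmallnessschrodinger} depend on the lower bound for the Hausdorff content of $E$; here that lower bound is $\sim\tau m$, so it degenerates as $\tau\to0$. With the balanced choice $\tau\asymp(a/M)^{1/(1+\beta)}$ the constants depend on $a/M$ and need not stay bounded as $a/M\to0$. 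Since the result is used as a black box, you have no quantitative control of this dependence, and in fact no such control is stated in \cite{LM18}. The two-scale fallback does not save the situation: in the regime $a\lesssim M\tau_0^{1+\beta}$ the trivial bound $b\leq M$ must be compared with the target $b\leq C\,a^{\alpha'}M^{1-\alpha'}$, and the latter can be arbitrarily smaller than $M$ when $a/M$ is small, so the trivial bound does not imply it. An additional warning sign: if this thickening argument closed, it would work for every $\delta\in(0,1)$ (only $1-\delta>0$ is needed in Theorem~\ref{cor:propagationsmallnessschrodinger}), which the paper explicitly identifies as an open problem. Finally, a minor point: the claimed interior $C^{2,\alpha}$ regularity is not available with $A$ Lipschitz and $V\in L^\infty$; what is true, and sufficient for your Taylor step, is that $v=\partial_t\hat u$ is itself a weak solution of the same $t$-independent equation, hence $v\in C^{1,\beta}_{\mathrm{loc}}$ by $W^{2,p}$ theory, and $\partial_tv(\cdot,0)=\partial_{tt}\hat u(\cdot,0)=0$ gives the Hölder estimate you need without invoking $C^{2,\alpha}$.
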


The new difficulty for proving \Cref{cor:propagationsmallnessschrodinger} and \Cref{cor:PropagationSmallness_withPotential} is that the results of \cite{LM18} are actually proved for divergence elliptic operators and their extensions to operators as in \eqref{eq:defHav} are not straightforward. One way to do it could consist in trying to adapt all the steps of their proof to a more general elliptic operator as considered here. But it is worth mentioning that \cite{LM18} is not self-contained as recalled by the authors and it uses some new deep results from \cite{Log18a} and \cite{Log18b}. We should also mention that one cannot expect to derive propagation of smallness for gradients of solutions to \eqref{eq:elliptic_divergence_formV} in full generality as in \cite[Theorem 5.1]{LM18} because as noted in \cite[Remark p. 362]{HHOHON99}, every closed subset of $\R^d$ could be the critical set of such a function so there is no hope to propagate smallness for gradients from sets of $(d-1-\delta)$-Hausdorff contents, even for small $\delta >0$. These are the main reasons why we decide to follow another easiest path that uses \cite[Theorems 2.1, 5.1]{LM18} as a black box.\medskip

We now present the main steps for the obtaining of \Cref{cor:propagationsmallnessschrodinger} and \Cref{cor:PropagationSmallness_withPotential}. Without loss of generality, we first reduce to the case $V \geq 0$. This reduction enables us to construct a positive multiplier that converts the Schrödinger type equation into a divergence elliptic equation.\medskip

\textbf{Reduction to the case $V \geq 0$.} We note that one can reduce the proof of \Cref{cor:propagationsmallnessschrodinger} and \Cref{cor:PropagationSmallness_withPotential} to the case 
\begin{equation}
\label{eq:Vsign}
V \geq 0.
\end{equation}
For \Cref{cor:propagationsmallnessschrodinger}, by looking at the function $\hat{u}(x,t)=u(x) \exp(\lambda t)$ that solves
\begin{equation}
\label{eq:elliptic_divergence_formVlambda}
    - \divergence_{x} \cdot (A(x) \nabla_x \hat{u}) - \partial_{tt} \hat{u}  + (V(x)+ \lambda^2) \hat{u}=0 \ \text{in} \ \Omega \times (-1,+1).
\end{equation}
For $\lambda \geq \|V\|_{\infty}^{1/2}$, we have that $\hat{V}(t,x) = V(x) + \lambda^2 \geq 0$, then one can apply Theorem \ref{cor:propagationsmallnessschrodinger} with $\hat{\Omega} = \Omega \times (-1,+1)$, $\hat{K} = K \times (-1/2,+1/2) \subset \subset \hat{\Omega}$, $\hat{E} = E \times (-1/2,+1/2) \subset \subset \hat{\Omega}$ to $\hat{u}$ satisfying \eqref{eq:elliptic_divergence_formVlambda}. For \Cref{cor:PropagationSmallness_withPotential}, the argument is in the same spirit by adding a ghost variable considering $\hat{v}(x,y,t) = \hat{u}(x,t) \exp(\lambda y)$ for some $\lambda \geq \|V\|_{\infty}^{1/2}$ and applying \Cref{cor:PropagationSmallness_withPotential} with $\hat{\Omega} =  \Omega \times (-1,1)$, $\hat{K} = K \times (-1/2,1/2)
\subset \subset \hat{\Omega}$ and $\hat{E} = E \times (-1/2,1/2) \subset \subset \hat{\Omega}$ to $\hat{v}$. Therefore in all the following, we will only consider the case \eqref{eq:Vsign}.\medskip

\textbf{Reduction to a divergence elliptic equation.} The key ingredient in the proofs of Theorem~\ref{cor:propagationsmallnessschrodinger} and \ref{cor:PropagationSmallness_withPotential} consists in constructing a suitable positive multiplier to the equation $H_{A,V} \phi = 0$, in the case when $V \geq 0$, that is showing the existence of $\phi \in W^{1, \infty}(\Omega_0)$ satisfying
\begin{equation*}
    - \divergence(A \nabla \phi) + V \phi = 0 \ \text{in}\ \Omega_0 \quad \text{and} \quad \phi >0\ \text{in}\ \Omega_0,
\end{equation*}
where $\Omega_0$ is a smooth domain satisfying
\begin{equation*}
 \text{dist}( \mathcal K, \partial \Omega_0) \geq \rho/2,\  \text{dist}(E, \partial \Omega_0) \geq \rho/2,\ \text{and}\ \Omega_0 \subset \subset \Omega.
\end{equation*}
This enables us to reduce the obtaining of propagation of smallness of solutions to \eqref{eq:elliptic_divergence_formV} to the application of propagation of smallness of solutions to divergence elliptic equations for $v=u/\phi$. Indeed, $v$ now satisfies
\begin{equation*}
    - \divergence(\phi^2 A \nabla v) = 0\ \text{in}\ \Omega_0.
\end{equation*}
Thanks to suitable lower and upper bounds on $\phi$, these propagation of smallness estimates obtained on $v$ provides estimates on $u$. For the case when $u$ satisfies \eqref{eq:EllipticEquation_NonDivForm}, the same strategy works but now the equation satisfied by $\hat{v} = \hat{u} / \phi$ is
\begin{equation*}
				  - \divergence_x \cdot (\phi^2 A(x) \nabla_x \hat{v})-  \partial_{t} ( \phi^2 \kappa(x) \partial_t \hat{v}) =0 \ \text{ in }  \Omega_0 \times (-1,1),
\end{equation*}
that is a divergence elliptic equation with the extra condition $\hat{v}(x,0)=0$. Propagation of smallness estimates on $|\nabla_{t,x} v|$ on particular sets will then provide the expected result \eqref{eq:PropagationSmallness_withPotential}.

\medskip

The following remarks are in order.

The main advantage of our proof is that we obtain propagation of smallness for solutions to Schrödinger type equations in a general setting without redoing all the arguments of Logunov and Malinnikova. But the main drawback of such a strategy is that for gradient of solutions, the application of propagation of smallness for gradients \cite[Theorem 5.1]{LM18} is applied to $v=u/\phi$ from which it is difficult to deduce estimates on $|\nabla u|$. Actually, this is not only a technical difficulty since, as recalled previously, one cannot expect to obtain propagation of smallness for gradients in full generality. Nevertheless, this method allows us to deal with the particular setting of \eqref{eq:EllipticEquation_NonDivForm} and to obtain \Cref{cor:PropagationSmallness_withPotential}. Fortunately, propagation of smallness estimates \eqref{eq:PropagationSmallness_withPotential} are sufficient for our applications to spectral estimates. 

Finally, we would like to highlight the very recent preprint \cite{Zhu24} from which one can also obtain Theorems~\ref{cor:propagationsmallnessschrodinger} and \ref{cor:PropagationSmallness_withPotential}, assuming that $V \in W^{1, \infty}(\Omega;\R)$, starting from \cite[Theorem 2.1 and Theorem 5.1]{LM18} as a black box. His strategy is rather different from ours because it consists in putting the zero order term $V$ in the principal part of the operator by adding a ghost variable. The Lipschitz assumption on $V$ seems to be difficult to remove with such a method.

Last but not least, the parameter $\delta_d \in (0,1)$ appearing in \Cref{cor:PropagationSmallness_withPotential} is small a priori and actually comes from \cite[Theorem 5.1]{LM18}. The extension to an arbitrary $\delta_d \in (0,1)$ is an open and very likely difficult open problem. However, it is worth mentioning that for $d=1$, we can take an arbitrary $\delta \in (0,1)$ by using the propagation of smallness result for gradients \cite[Theorem 1.2]{Zhu23} in dimension $d+1=2$ instead of \cite[Theorem 5.1]{LM18}, together with our strategy of reduction to divergence elliptic equation. The same remark will apply in the next for spectral estimates and applications. See also \cite{SSY23} for a similar strategy when $d=1$ but with the use of \cite[Theorem 1.1]{Zhu23}.

\subsection{Spectral estimates on compact manifolds and applications}\label{sec:Main_result_spectral_estimates_on_manifold}

Let $M$ be a $C^1 \cap W^{2,\infty}$, connected, compact manifold of dimension $d \geq 1$, possibly with boundary, equipped with a Riemannian metric $g$. In this section, we fix an Atlas $\mathcal A= (\mathcal V_\sigma, \Psi_\sigma)_{\sigma \in \mathcal J}$ containing a finite number of charts with $(W^{2, \infty}\cap  C^1)$-diffeomorphisms $\Psi_\sigma :  \mathcal V_\sigma \longrightarrow \Psi_\sigma(\mathcal V_{\sigma}) \subset \rr^{d-1} \times \rr_+$ such that there exists a family of open sets $(\mathcal U_\sigma)_{\sigma \in \mathcal J}$ satisfying 
\begin{equation}\label{eq:charts_recovering0}
M = \bigcup_{\sigma \in \mathcal J} \mathcal U_\sigma,
\end{equation}
and such that $ \mathcal U_\sigma$ is compactly included in the open set $  \mathcal V_\sigma$ in $M$, for all $\sigma \in \mathcal J$. In the case when $M$ is assumed to be without boundary, then for any $\sigma  \in \mathcal J$, $\Psi(\mathcal V_{\sigma})$ is an open set of $\rr^d$.

Let us consider the second order elliptic operator
\begin{equation}
\label{eq:defHav}
   H_{g, V} u =-\Delta_g u + V(x) u,\qquad x \in M,
\end{equation}
where $V = V(x)$ is a real-valued bounded function, i.e. 
\begin{equation}
    \label{eq:hypothesisVM}
    V \in L^{\infty}(M;\R),
\end{equation} and $g$ is assumed to be $\Lambda_1$-elliptic and $\Lambda_2$-Lipschitz, in the sense that if $(g^{\sigma}_{i,j})_{1 \leq i,j \leq d}$ are the local coordinates of $g$ in a local chart $(\mathcal V_\sigma, \Psi_\sigma)$, 

\begin{equation}\label{eq:EllipticM}
    \Lambda_{1}^{-1} |\xi|^2 \leq \sum_{i,j} g^{\sigma}_{i,j}(\Psi_\sigma^{-1}(x)) \xi_i\xi_j \leq  \Lambda_{1} |\xi|^2, \qquad \forall x \in \Psi_\sigma(\mathcal V_\sigma), \xi \in \rr^d,
\end{equation}
and
\begin{equation}
\label{eq:LipschitzM}
    |g^{\sigma}_{ij} \circ \Psi_\sigma^{-1}(x) - g^{\sigma}_{ij}\circ \Psi_\sigma^{-1}(y)| \leq \Lambda_2 |x-y|, \qquad \forall x, y \in \Psi_{\sigma}(\mathcal V_\sigma),
\end{equation}
for some $\Lambda_1>0$ and $\Lambda_2 >0$. 

Let us define 
\begin{equation}
    \text{Dom}(H_{g, V}) = \{ u \in H^2(M)\ ;\ u = 0\ \text{on}\ \partial M\ \text{or}\ \partial_{\nu} u = 0\ \text{on}\ \partial M\}.
\end{equation}
Note that if $\partial M = \emptyset$ then $\mathrm{Dom}(H_{g, V}) = H^2(M)$. Under these assumptions, this is well-known that $H_{g, V}$ admits an orthonormal basis in $L^2(M)$ of eigenfunctions denoted by $(\phi_k)_{k \geq 1}$, associated to the sequence of real eigenvalues $(\lambda_k)_{k \geq 1}$. Given $\Lambda >0$, we introduce the spectral projector $\Pi_{\Lambda}$ as follows
\begin{equation}
    \Pi_{\Lambda} u = \sum_{\lambda_k \leq \Lambda} \langle u, \phi_k\rangle_{L^2} \phi_k,\qquad \forall u   \in L^2(M).
\end{equation}

Our first main result states the following spectral estimates for the Schrödinger operator \eqref{eq:defHav}.

\begin{theorem}
\label{thm:spectralcompact}

There exists $\delta_d \in (0,1)$ such that for all $\delta \in [0, \delta_d]$, for every observation set $\omega \subset M$ satisfying $\mathcal{C}_{\mathcal H}^{d-\delta}(\omega) \geq m > 0$, there exists $C=C(M,g,V,\mathcal A,\delta,m)>0$ such that for every $\Lambda >0$, we have
\begin{equation}
\label{eq:hausdorffcompact}
   \norme{\Pi_{\Lambda} u}_{L^{\infty}(M)} \leq C e^{C \sqrt{\Lambda}} \sup_{x \in \omega} \left|(\Pi_{\Lambda} u)(x) \right|\qquad \forall u \in L^2(M).
\end{equation}

In particular, for every measurable set $\omega \subset M$ satisfying $|\omega| \geq m>0$, there exists $C=C(M,g,V,\mathcal A , m)>0$ such that for every $\Lambda >0$, we have 
\begin{equation}
\label{eq:measurablecompact}
   \norme{\Pi_{\Lambda} u}_{L^{\infty}(M)} \leq C e^{C \sqrt{\Lambda}} \norme{ \Pi_{\Lambda} u }_{L^1(\omega)}\qquad \forall u \in L^2(M).
\end{equation}

\end{theorem}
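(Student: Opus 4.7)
I would follow the Burq--Moyano approach, using the new \Cref{cor:PropagationSmallness_withPotential} in place of the gradient propagation result of \cite{LM18} on which \cite{BM23} relies. The plan is to lift $u_\Lambda:=\Pi_\Lambda u$ to a function of one extra variable that solves an equation of the form \eqref{eq:EllipticEquation_NonDivForm}, to apply \Cref{cor:PropagationSmallness_withPotential} inside one local chart of $M$ that captures a substantial portion of $\omega$, and then to transport the resulting pointwise bound to the whole of $M$ via the classical Jerison--Lebeau inequality \eqref{eq:PiLambdaSpectralIntro}.

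\textbf{Construction of the lift.} Fix $C_0>\|V\|_{L^{\infty}}$ so that $\lambda_k+C_0>0$ for every $k$, and set
\begin{equation*}
\hat u(x,t)=\sum_{\lambda_k\le\Lambda}\langle u,\phi_k\rangle_{L^2}\,\frac{\sinh\bigl(\sqrt{\lambda_k+C_0}\,t\bigr)}{\sqrt{\lambda_k+C_0}}\,\phi_k(x),\qquad (x,t)\in M\times(-1,1).
\end{equation*}
A direct computation gives $\hat u(x,0)=0$, $\partial_t\hat u(x,0)=u_\Lambda(x)$ and $-\Delta_g\hat u-\partial_{tt}\hat u+(V-C_0)\hat u=0$. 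Read in any chart $(\mathcal V_\sigma,\Psi_\sigma)$, this becomes exactly the divergence-form equation \eqref{eq:EllipticEquation_NonDivForm}, with $\kappa_\sigma=\sqrt{|g^\sigma|}$ and $A_\sigma$ proportional to $(g^{\sigma,ij})$, which are uniformly Lipschitz and uniformly elliptic thanks to \eqref{eq:EllipticM}--\eqref{eq:LipschitzM}. Orthogonality together with the Sobolev/Weyl-type bound $\|u_\Lambda\|_{L^{\infty}(M)}\le C(1+\Lambda)^{d/4}\|u_\Lambda\|_{L^{2}(M)}$ yields the crude global estimate
\begin{equation*}
\|\hat u\|_{W^{1,\infty}(M\times(-1,1))}\le Ce^{C\sqrt{\Lambda}}\|u_\Lambda\|_{L^{\infty}(M)}.
\end{equation*}

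\textbf{Local application and propagation to $M$.} Because the atlas is finite, pigeonhole supplies an index $\sigma_0$ with $\mathcal C^{d-\delta}_{\mathcal H}(\omega\cap\mathcal U_{\sigma_0})\ge m/|\mathcal J|$; shrinking $\omega$ slightly if needed, one may also assume that this portion stays at positive distance from $\partial M$. Lipschitz-invariance of the Hausdorff content (up to constants) transfers this lower bound to $E:=\Psi_{\sigma_0}(\omega\cap\mathcal U_{\sigma_0})$. Fixing an open ball $B\subset\subset\Psi_{\sigma_0}(\mathcal U_{\sigma_0})$ and applying \Cref{cor:PropagationSmallness_withPotential} with $\mathcal K=\overline{B}$ will yield
\begin{equation*}
\sup_{B}|u_\Lambda|\le C\bigl(\sup_{\omega}|u_\Lambda|\bigr)^{\alpha}\|\hat u\|_{W^{1,\infty}(M\times(-1,1))}^{1-\alpha}.
\end{equation*}
Let $\omega_0\subset M$ denote the pullback of $B$, an open set. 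The classical Jerison--Lebeau inequality \eqref{eq:PiLambdaSpectralIntro} applied on $\omega_0$ (which remains valid for our Lipschitz metric and bounded $V$ through standard Carleman estimates), combined with the Weyl bound once more, gives $\|u_\Lambda\|_{L^{\infty}(M)}\le Ce^{C\sqrt{\Lambda}}\sup_{B}|u_\Lambda|$. Chaining the last three displays produces
\begin{equation*}
\|u_\Lambda\|_{L^{\infty}(M)}\le Ce^{C\sqrt{\Lambda}}\bigl(\sup_{\omega}|u_\Lambda|\bigr)^{\alpha}\bigl(e^{C\sqrt{\Lambda}}\|u_\Lambda\|_{L^{\infty}(M)}\bigr)^{1-\alpha},
\end{equation*}
and solving for $\|u_\Lambda\|_{L^{\infty}(M)}$ (absorbing the factor $(1-\alpha)$ into the exponential constant) delivers \eqref{eq:hausdorffcompact}.

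\textbf{The $L^1$ version and main difficulty.} To pass from \eqref{eq:hausdorffcompact} to \eqref{eq:measurablecompact}, set $\omega_1:=\{x\in\omega:|u_\Lambda(x)|\le 2\|u_\Lambda\|_{L^1(\omega)}/|\omega|\}$; Markov's inequality gives $|\omega_1|\ge m/2$, and \eqref{eq:equivalencehaussdorff} together with the equivalence of the $d$-Hausdorff content and Lebesgue measure yields $\mathcal C^{d-\delta}_{\mathcal H}(\omega_1)\ge c(m)>0$. Applying \eqref{eq:hausdorffcompact} on $\omega_1$ and using $\sup_{\omega_1}|u_\Lambda|\le 2\|u_\Lambda\|_{L^1(\omega)}/m$ then closes the argument. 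The hardest part of the plan will be the local step: one has to verify that the pushforwards $A_{\sigma_0},\kappa_{\sigma_0}$ satisfy \eqref{eq:LipschitzAOmega}--\eqref{eq:LipschitzkappaOmega} with constants controlled uniformly over the atlas, and handle charts reaching $\partial M$ either by first discarding a neighborhood of the boundary from $\omega$ or by extending $\hat u$ by reflection across the Dirichlet/Neumann boundary; everything else reduces to orthogonality of eigenfunctions and the already-known Jerison--Lebeau estimate.
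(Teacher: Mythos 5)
Your route shares the paper's core ingredients (the $\sinh$-lift in a ghost variable and an application of \Cref{cor:PropagationSmallness_withPotential} inside a single well-chosen chart), but diverges in how it goes from a local bound to a global one. The paper proves a local interpolation inequality on \emph{every} chart (its Proposition~4.1) and then propagates across the atlas by a connectedness/chain-of-charts argument, never invoking the Jerison--Lebeau estimate; you instead apply \Cref{cor:PropagationSmallness_withPotential} once to control $\sup_B|\Pi_\Lambda u|$ on a small ball $B$ inside the chosen chart and then use the classical open-set spectral inequality \eqref{eq:PiLambdaSpectralIntro} together with a Weyl/Sogge $L^2\to L^\infty$ bound to lift this to all of $M$. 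This is a legitimate and somewhat shorter alternative, and the final absorption step (solving $X\le Ce^{C\sqrt\Lambda}Y^\alpha(e^{C\sqrt\Lambda}X)^{1-\alpha}$ for $X$) is carried out correctly; its cost is that it imports \eqref{eq:PiLambdaSpectralIntro} as a black box for a $C^1\cap W^{2,\infty}$ manifold with merely Lipschitz metric and bounded potential. This is true via Carleman estimates with $W^{1,\infty}$ principal part, but it is exactly the dependence the paper's chain argument is designed to avoid re-invoking, so you should cite a reference covering that regularity rather than \cite{JL99} alone. Your $L^1$ reduction via Markov's inequality is equivalent in substance to the paper's contradiction argument on the sublevel set $\hat\omega$.

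One genuine gap concerns the boundary. Your first suggestion, ``discarding a neighborhood of the boundary from $\omega$,'' does not work: the constant in \eqref{eq:hausdorffcompact} must depend only on $m$, and an admissible $\omega$ may be concentrated entirely in an arbitrarily thin collar of $\partial M$, in which case discarding such a neighborhood destroys the Hausdorff-content lower bound. Your second suggestion, reflection across $\partial M$, is the right idea and is precisely what the paper carries out through the double-manifold construction (Theorem~4.2). That step is not a triviality: one has to equip the doubled space with a $C^\infty$ structure under which the reflected metric is still Lipschitz and check, via a jump formula across the gluing hypersurface, that Dirichlet (resp.\ Neumann) eigenfunctions extend oddly (resp.\ evenly) to eigenfunctions of the doubled operator with the same eigenvalue; your plan should make this explicit rather than leaving it as an ``or.''
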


The following comments are in order. First, Theorem \ref{thm:spectralcompact} generalizes \cite[Theorem 1]{BM23} to the case of the Schrödinger operator as in \eqref{eq:defHav}. Secondly, the inequality \eqref{eq:measurablecompact} is a $L^{\infty}$-$L^1$ spectral estimate from which one can easily deduce the more standard $L^{2}$-$L^2$ spectral estimate as recalled in \eqref{eq:PiLambdaSpectralIntro} by using the continuous embeddings $L^{\infty}(M) \hookrightarrow L^2(M)$ and $L^2(\omega) \hookrightarrow L^1(\omega)$
\begin{equation}
\label{eq:measurablecompactL2}
   \norme{\Pi_{\Lambda} u}_{L^{2}(M)} \leq C e^{C \sqrt{\Lambda}} \norme{\Pi_{\Lambda} u}_{L^2(\omega)}\qquad \forall u \in L^2(M).
\end{equation}
 On the other hand, the inequality \eqref{eq:hausdorffcompact} is a $L^{\infty}$-$L^{\infty}$ spectral estimate from which one can only deduce a $L^{2}$-$L^{\infty}$ spectral estimate
\begin{equation}
\label{eq:hausdorffcompactL2}
   \norme{\Pi_{\Lambda} u}_{L^{2}(M)} \leq  C e^{C \sqrt{\Lambda}} \sup_{x \in \omega} \left|(\Pi_{\Lambda} u)(x) \right|\qquad \forall u \in L^2(M).
\end{equation}
Moreover, without extra assumption on $\omega$ here, there is no hope to transform \eqref{eq:hausdorffcompactL2} into a $L^{2}$-$L^{2}$ estimate because one can have $|\omega|=0$. Last but not least, the parameter $\delta \in (0,1)$ is small a priori and actually comes from \Cref{cor:PropagationSmallness_withPotential} so from \cite[Theorem 5.1]{LM18}. The extension to an arbitrary $\delta \in (0,1)$ is an open and very likely difficult open problem.

The strategy of the proof of Theorem \ref{thm:spectralcompact} will follow the one of \cite[Theorem 1]{BM23} that uses propagation of smallness for gradient of solution to elliptic equations from \cite[Theorem 5.1]{LM18}. The new difficulty here is that the results of \cite{LM18} are actually proved for divergence elliptic operators and their extensions to operators as in \eqref{eq:defHav} are not straightforward. This is why we will actually use our new Theorem \ref{cor:PropagationSmallness_withPotential}.\medskip

We now focus on the time evolution equation
    \begin{equation}
	\label{eq:heat_manifold}
		\left\{
			\begin{array}{ll}
				  \partial_t u  + H_{g,V} u=0 & \text{ in }  (0,+\infty) \times M, \\
				u(0, \cdot) = u_0 & \text{ in }M,
			\end{array}
		\right.
\end{equation}
completed with homogeneous Dirichlet or Neumann boundary conditions if $\partial M  \neq \emptyset$. 

Our second main result is the establishment of the following observability inequalities.
\begin{theorem}

There exists $\delta_d \in (0,1)$ such that for all $\delta \in [0, \delta_d]$, $m>0$ and for every measurable set $\omega \subset M$ satisfying $\mathcal{C}_{\mathcal H}^{d-\delta}(\omega) \geq m >0$, there exists a positive constant $C=C(M,g,V,\mathcal A,\delta,m)>0$ such that for every $T \in (0,1)$, $u_0 \in L^2(M)$, the solution $u \in C([0,T];L^2(M))$ of \eqref{eq:heat_manifold} satisfies 
\begin{equation}
\label{eq:obsheatmanifoldhausdorff}
    \|u(T, \cdot)\|_{L^2(M)}^2 \leq C e^{\frac CT} \int_0^T \left(\sup_{x \in \omega} |u(t,x)|\right)^2 dx dt.
\end{equation}

For every measurable set $\omega \subset M$ satisfying $|\omega| \geq m>0$, there exists a positive constant $C =C(M,g,V,\mathcal A,m)>0$ such that for every $T \in (0,1)$, $u_0 \in L^2(M)$, the solution $u \in C([0,T];L^2(M))$ of \eqref{eq:heat_manifold} satisfies
\begin{equation}
\label{eq:obsheatmanifold}
    \|u(T, \cdot)\|^2_{L^2(M)} \leq C e^{\frac CT} \int_0^T \|u(t, \cdot)\|^2_{L^2(\omega)} dt.
\end{equation}
\end{theorem}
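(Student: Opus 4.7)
The plan is to deduce both observability inequalities from the spectral estimates of \Cref{thm:spectralcompact} by running the Lebeau--Robbiano telescoping strategy of \cite{LR95}. Setting $\tilde u(t,x) := e^{Kt} u(t,x)$ with $K > \|V\|_{L^\infty}$ reduces the argument to the shifted operator $\widetilde H := H_{g,V} + K \geq 0$; this only adds a factor $e^{KT} \leq e^K$ since $T \in (0,1)$, and yields a spectral estimate of the same form for $\widetilde H$ (the cutoff $\Lambda$ is merely shifted). Under this reduction $e^{-t\widetilde H}$ is a contraction on $L^2(M)$, and one has the dissipation estimate $\|(I-\Pi_\Lambda)\tilde u(t)\|_{L^2(M)} \leq e^{-\Lambda(t-s)}\|(I-\Pi_\Lambda)\tilde u(s)\|_{L^2(M)}$ together with the monotonicity $\|\Pi_\Lambda \tilde u(t)\|_{L^2(M)} \leq \|\Pi_\Lambda \tilde u(s)\|_{L^2(M)}$ for $s<t$.

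The measurable case \eqref{eq:obsheatmanifold} is the most direct: applying $L^\infty(M)\hookrightarrow L^2(M)$ on the left and Cauchy--Schwarz on the right of \eqref{eq:measurablecompact} gives the classical $L^2$--$L^2$ spectral inequality $\|\Pi_\Lambda v\|_{L^2(M)} \leq C e^{C\sqrt{\Lambda}}\|\Pi_\Lambda v\|_{L^2(\omega)}$, and the standard Lebeau--Robbiano iteration (see e.g.\ \cite[Section~6]{LRL12}) then produces \eqref{eq:obsheatmanifold} with cost $Ce^{C/T}$.

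For the Hausdorff-content case \eqref{eq:obsheatmanifoldhausdorff}, I would fix a dyadic partition $T_k = T(1-2^{-k})$ of $[0,T]$ and frequencies $\Lambda_k \sim A\cdot 4^k/T^2$, with $A$ large to be chosen. On each $I_k := [T_k,T_{k+1}]$, combining \eqref{eq:hausdorffcompact} with $L^\infty(M)\hookrightarrow L^2(M)$ yields $\|\Pi_{\Lambda_k}\tilde u(t)\|_{L^2(M)} \leq C e^{C\sqrt{\Lambda_k}}\sup_{x\in\omega} |\Pi_{\Lambda_k}\tilde u(t,x)|$. The new step, absent from the usual scheme, is the splitting
\[
\sup_{x\in\omega} |\Pi_{\Lambda_k}\tilde u(t,x)| \leq \sup_{x\in\omega} |\tilde u(t,x)| + \|(I-\Pi_{\Lambda_k})\tilde u(t)\|_{L^\infty(M)},
\]
coupled with an $L^\infty$-control of the high-frequency remainder obtained via Sobolev embedding $H^s(M)\hookrightarrow L^\infty(M)$ for $s>d/2$ and the dissipation estimate from $T_{k-1}$ to $t$, namely $\|(I-\Pi_{\Lambda_k})\tilde u(t)\|_{L^\infty(M)} \leq C\Lambda_k^{\beta} e^{-\Lambda_k(t-T_{k-1})/2}\|\tilde u(T_{k-1})\|_{L^2(M)}$ for some $\beta=\beta(d)>0$. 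For $A$ large enough, the loss $\Lambda_k^\beta e^{C\sqrt{\Lambda_k}}$ is absorbed in the geometric decay $e^{-\Lambda_k(t-T_{k-1})/2}$; squaring, integrating over $I_k$, and telescoping as in the classical Lebeau--Robbiano proof then produces \eqref{eq:obsheatmanifoldhausdorff} with cost $Ce^{C/T}$.

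The main obstacle is the pointwise observation norm $\sup_{x\in\omega}|u(t,x)|$, which cannot be plugged directly into an $L^2$-based iteration. The remedy is the $L^\infty$ control of the high-frequency tail described above: the polynomial loss in $\Lambda_k$ coming from Sobolev embedding is comfortably dominated by the exponential heat decay once $A$ is large enough, which is the only genuinely new ingredient compared with the classical Lebeau--Robbiano scheme.
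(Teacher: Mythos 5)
Your proposal follows the same route as the paper: the Lebeau--Robbiano iteration driven by the spectral estimates of Theorem~\ref{thm:spectralcompact}, with the $L^\infty$-control of the high-frequency remainder being the only genuinely new step for \eqref{eq:obsheatmanifoldhausdorff} --- which is exactly what the paper delegates to \cite[Section~4]{BM23}, while \eqref{eq:obsheatmanifold} is dispatched via \cite{Mil10} or \cite{BPS18}. Two small slips worth fixing: the gauge transformation should be $\tilde u = e^{-Kt}u$ (not $e^{Kt}u$), since $\partial_t(e^{-Kt}u)+(H_{g,V}+K)(e^{-Kt}u)=0$ gives the nonnegative shifted generator and the favourable pointwise comparison $\sup_{\omega}|\tilde u(t,\cdot)|\le\sup_{\omega}|u(t,\cdot)|$, whereas $e^{Kt}u$ shifts the wrong way. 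Also, because $g$ is only Lipschitz, the $L^\infty$ bound on $(I-\Pi_{\Lambda_k})\tilde u(t)$ is more robustly obtained from ultracontractivity of $e^{-t\widetilde H}$ (Gaussian heat-kernel upper bounds, $\|e^{-t\widetilde H}\|_{L^2\to L^\infty}\lesssim t^{-d/4}$) than from $H^s(M)\hookrightarrow L^\infty(M)$ with $s>d/2$, since the latter needs $W^{s,2}$ elliptic regularity that is not available for $s>2$ in this setting.
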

By using the spectral estimates \eqref{eq:measurablecompactL2}, the proof of \eqref{eq:obsheatmanifold} is by now classical and originally comes from the Lebeau-Robbiano method for obtaining the null-controllability of the heat equation starting from a spectral estimate, see \cite{LR95} and \cite[Section 6]{LRL12}. This strategy was latter extended by Miller in \cite{Mil10}. Therefore, \eqref{eq:obsheatmanifold} directly comes from \cite[Theorem 2.2]{Mil10} or \cite[Theorem 2.1]{BPS18}. The proof of \eqref{eq:obsheatmanifoldhausdorff} is in the same spirit but adapted to the the particular functional setting of the $L^2-L^{\infty}$ spectral estimate \eqref{eq:hausdorffcompactL2}, see \cite[Section 4]{BM23} for details. The restriction $T \in (0,1)$ that we will keep in the following is simply for the obtaining of the constant of observability in small time of the form $C e^{\frac CT}$. \medskip

For a measurable $\omega \subset M$, we finally focus on the controlled system
    \begin{equation}
	\label{eq:heat_manifold_controlled}
		\left\{
			\begin{array}{ll}
				  \partial_t y  + H_{g,V} y= h 1_{\omega} & \text{ in }  (0,+\infty) \times M, \\
				y(0, \cdot) = y_0 & \text{ in }M,
			\end{array}
		\right.
\end{equation}
completed with homogeneous Dirichlet or Neumann boundary conditions if $\partial M  \neq \emptyset$. In \eqref{eq:heat_manifold_controlled}, at time $t \in [0,+\infty)$, $y(t,\cdot) : M \to \R$ is the state and $h(t,\cdot) : \omega \to \R$ is the control. In the following, we denote by $\mathcal M(M)$ the space of Borel measure on $M$.

Our last main result of this section provides null-controllability results for \eqref{eq:heat_manifold_controlled}.
\begin{theorem}
\label{thm:controlledmanifold}

There exists $\delta_d \in (0,1)$ such that for all $\delta \in [0, \delta_d]$, $m>0$ and for every measurable set $\omega \subset M$ satisfying $\mathcal{C}_{\mathcal H}^{d-\delta}(\omega) \geq m >0 $, there exists $C=C(M,g,V,\mathcal A, \delta,m)>0$ such that for every $T \in (0,1)$ and $y_0 \in L^2(M)$, there exists $h \in L^2(0,T;\mathcal M(M))$ supported in $(0,T)\times \omega$ satisfying
\begin{equation}
    \label{eq:estimatecontrolmanifoldswild}
    \int_0^T \|h(t)\|_{\mathcal M(M)}^2 dt \leq C  e^{\frac CT} \norme{y_0}_{L^2(M)}^2,
\end{equation}
such that the solution $y$ of \eqref{eq:heat_manifold_controlled} satisfies $y(T,\cdot) = 0$.

For every measurable set $\omega \subset M$ satisfying $|\omega| \geq m >0$, there exists a positive constant $C = C(M,g,V,\mathcal A, m)>0$ such that for every $T \in (0,1)$, $y_0 \in L^2(M)$, there $h \in L^2(0,T;L^2(\omega))$ satisfying
\begin{equation}
    \label{eq:estimatecontrolmanifolds}
    \norme{h}_{L^2(0,T;L^2(\omega))} \leq C  e^{\frac CT} \norme{y_0}_{L^2(M)},
\end{equation}
such that the solution $y \in C([0,T];L^2(M))$ of \eqref{eq:heat_manifold_controlled} satisfies $y(T,\cdot) = 0$.
\end{theorem}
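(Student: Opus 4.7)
The plan is to derive both parts of the statement from the corresponding observability inequalities of the previous theorem via the Hilbert Uniqueness Method (HUM), applied in two different functional settings that match the right-hand sides of \eqref{eq:obsheatmanifold} and \eqref{eq:obsheatmanifoldhausdorff}. Since the observability estimates have already been established, only a convex-duality argument remains in each case.

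For the Lebesgue-measure case (second part), I would apply HUM in its classical $L^2$ form. Given $y_0 \in L^2(M)$ and $T \in (0,1)$, introduce the functional
\[ J(\varphi_T) = \frac12 \int_0^T \|\varphi(t, \cdot)\|_{L^2(\omega)}^2\, dt + \langle y_0, \varphi(0, \cdot)\rangle_{L^2(M)}, \qquad \varphi_T \in L^2(M), \]
where $\varphi$ solves the backward adjoint problem $-\partial_t \varphi + H_{g,V}\varphi = 0$ with terminal data $\varphi(T,\cdot) = \varphi_T$ and the matching homogeneous boundary condition. The observability \eqref{eq:obsheatmanifold} shows that $J$ is coercive with constant of order $e^{-C/T}$, hence $J$ admits a unique minimizer $\hat{\varphi}_T$. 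Setting $h = \hat{\varphi}\,\mathbf{1}_{\omega}$ yields a control in $L^2(0,T;L^2(\omega))$ driving $y$ to zero at time $T$; the Euler--Lagrange equation combined with the coercivity bound then produces the cost estimate \eqref{eq:estimatecontrolmanifolds}. This step is completely classical and is already essentially contained in \cite[Theorem~2.2]{Mil10} or \cite[Theorem~2.1]{BPS18} as cited earlier in the paper.

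For the Hausdorff-content case (first part), the observability \eqref{eq:obsheatmanifoldhausdorff} is of $L^2$--$L^\infty$ type, so its natural dual control space consists of time-dependent Borel measures. I would replace the $L^2(\omega)$ norm in $J$ by the semi-norm $t \mapsto \sup_{x \in \omega}|\varphi(t,x)|$, which is well-defined for $t < T$ thanks to the instantaneous smoothing of the semigroup generated by $H_{g,V}$, and minimize the resulting functional $\tilde J$ on $L^2(M)$. The Riesz representation $C_0(\omega)^* = \mathcal M(\omega)$, together with a Fenchel--Rockafellar duality argument, identifies the minimizer with a control $h \in L^2(0,T;\mathcal M(M))$ supported in $(0,T) \times \omega$, and \eqref{eq:obsheatmanifoldhausdorff} supplies the cost estimate \eqref{eq:estimatecontrolmanifoldswild}. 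This is exactly the Burq--Moyano scheme of \cite[Section~4]{BM23} transposed to the Schrödinger setting, the potential $V$ being harmlessly absorbed into the observability once the latter has been proved.

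The main obstacle I anticipate is the measure-valued formulation: one must justify that $t \mapsto \sup_{x \in \omega}|\varphi(t,\cdot)|$ is measurable and square-integrable on $(0,T)$, and that the Euler--Lagrange condition for $\tilde J$ genuinely produces an element of $L^2(0,T;\mathcal M(M))$ supported on $\omega$. A density argument on $\varphi_T \in L^2(M)$, combined with the fact that the heat semigroup associated with $H_{g,V}$ maps $L^2(M)$ continuously into $C(M)$ for any positive time with an explicit algebraic blow-up as $t \to 0^+$, should handle this regularity bookkeeping and yield the square-integrability in $t$. Beyond these technical points, no new input is required, so that \Cref{thm:controlledmanifold} can fairly be regarded as a corollary of the spectral analysis carried out earlier in the paper.
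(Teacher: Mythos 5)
Your proposal matches the paper's (brief) justification: the second part is obtained by the classical HUM/duality argument from the $L^2$ observability inequality \eqref{eq:obsheatmanifold} (the paper cites \cite[Theorem 2.44]{Cor07}), and the first part follows the measure-valued duality scheme of Burq--Moyano adapted from \eqref{eq:obsheatmanifoldhausdorff} (the paper cites \cite[Section 5]{BM23}, while you point to Section 4, which is the observability part of that reference --- the controllability details are in Section 5). Apart from that minor citation slip, you have correctly identified that no new ingredient beyond the observability estimates is needed, which is exactly the paper's stance.
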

Recall that the norm of the space of Borel measures on the metric space $\omega$, denoted by $\mathcal{M}(\omega)$, is defined as
\begin{equation}
    \label{eq:normeborel}
    \| \mu \|_{\mathcal M(M)} = \sup_{f \in C^0(M)} \frac{\left|\int_{M} f d \mu \right|}{\|f\|_{\infty}}.
\end{equation}

The second part of Theorem \ref{thm:controlledmanifold} comes from a classical duality argument together with the use of the observability estimate \eqref{eq:obsheatmanifold}, see for instance \cite[Theorem 2.44]{Cor07}. The first part is less standard due to the functional setting but details can be found in \cite[Section 5]{BM23}. 

\subsection{Spectral estimates on the Euclidean space and applications}

Let us consider the second order elliptic operator
\begin{equation}
\label{eq:defHavRd}
    H_{g, V, \kappa} u =- \frac{1}{\kappa(x)} \divergence(\kappa(x) g^{-1}(x)  \nabla u) + V(x) u,\qquad x \in \R^d,
\end{equation}
where $g(x)=(g_{ij}(x))_{1, \leq i, j \leq d}$ is a symmetric uniformly elliptic matrix with Lipschitz entries
\begin{equation}
\label{eq:LipschitzRd}
    \Lambda_{1}^{-1} |\xi|^2 \leq \langle g(x) \xi, \xi \rangle \leq  \Lambda_{1} |\xi|^2,\quad |g_{ij}(x) - g_{ij}(y)| \leq \Lambda_2 |x-y|, \qquad x, y \in \R^d,\ \xi \in \R^d,
\end{equation}
for some $\Lambda_1, \Lambda_2 >0$, $V = V(x)$ is a real-valued bounded function, i.e.
\begin{equation}
\label{eq:boundVRd}
  V  \in L^{\infty}(\rr^d; \rr),
\end{equation}
and $\kappa \in W^{1,\infty}(\rr^d, \rr^*_+)$ is a positive bounded Lipschitz density satisfying
\begin{equation}
    \label{eq:hypothesiskappa}
\Lambda_{1}^{-1} \leq \kappa \leq  \Lambda_{1}.
\end{equation}  
The operator $H_{g,V, \kappa}$ is an unbounded self-adjoint operator on $L^2(\R^d, \kappa dx)$ with domain $H^2(\R^d)$. Notice that, under the assumption \eqref{eq:hypothesiskappa}, we have $$ \Lambda_{1}^{-\frac 12} \|\cdot \|_{L^2(\rr^d)} \leq \| \cdot \|_{L^2(\rr^d, \kappa dx)} \leq  \Lambda_{1}^{\frac 12} \| \cdot \|_{L^2(\rr^d)},$$ and $L^2(\rr^d)= L^2(\rr^d, \kappa dx)$. However, let us insist on the fact that the self-adjointness of $H_{g, V, \kappa}$ is related to the scalar product
$$\langle f, g \rangle_{L^2(\rr^d, \kappa dx)} = \int_{\rr^d} f(x) g(x) \kappa(x) dx.$$
Given $\Lambda >0$, we introduce the spectral projector as follows
\begin{equation}
    \label{eq:defspectralRd}
    \Pi_{\Lambda} u = 1_{H_{g,V, \kappa}} u = \int_{-\|V\|_{\infty}}^{\Lambda} d m\qquad \forall u \in L^2(\R^d, \kappa dx),
\end{equation}
where $d m$ is the spectral measure of $H_{g,V, \kappa}$.

In the sequel, we need the following definition.
\begin{definition}
Let $R>0$ and $0<\gamma\leq 1$. A measurable subset $\omega \subset \R^d$ is said to be $\gamma$-thick at scale $R$ if
\begin{equation}
    \label{eq:measurethick}
|\omega \cap B(x,R)| \geq \gamma |B(x,R)|,\qquad \forall x \in \R^d.
\end{equation}
A subset $\omega \subset \rr^d$ is said to be thick if and only if it is $\gamma$-thick at scale $R$ for some $R>0$ and $0<\gamma \leq 1$.

Let $R>0$, $m>0$, and $0< d' \leq d$. A measurable set $\omega \subset \R^d$ is said to be $(m, d')$-uniformly distributed at scale $R$ if
\begin{equation}
    \label{eq:haussdorffmeasureRd}
\mathcal{C}_{\mathcal{H}}^{d'}(\omega \cap B(x,R)) \geq m, \qquad \forall x \in \R^d.
\end{equation}
A subset $\omega \subset \rr^d$ is said to be $d'$-uniformly distributed if and only if it is $(m, d')$-uniformly distributed at scale $R$ for some $R,m >0$.
\end{definition}
Notice that, since the $d$-dimensional Hausdorff measure is equivalent to the Lebesgue measure of $\rr^d$, a subset $\omega \subset \rr^d$ is $d$-uniformly distributed if and only if it is thick. In \eqref{eq:measurethick} and \eqref{eq:haussdorffmeasureRd}, the parameter $R>0$ will be always chosen such that
\begin{equation}
    \label{eq:recouvrement}
    \R^d = \bigcup_{k \in \Z^d} B(k,R).
\end{equation}

Our main result of this section states the following spectral estimates for the Schrödinger operator \eqref{eq:defHavRd} with particular observation sets. 

\begin{theorem}
\label{thm:spectralRd}
Let $R>0$ be such that \eqref{eq:recouvrement} holds, $0<\gamma \leq 1$ and $m>0$.

There exists $\delta_d \in (0,1)$ such that for all $\delta \in [0, \delta_d]$ and for every $(m, d-\delta)$-uniformly distributed set $\omega \subset \rr^d$ at scale $R$, there exists $C=C(\Lambda_1, \Lambda_2, \|V\|_{L^{\infty}},R,m, \delta)>0$ such that for every $\Lambda >0$, we have
\begin{equation}
\label{eq:hausdorffRd}
   \norme{\Pi_{\Lambda} u}_{L^{2}(\R^d)} \leq C e^{C \sqrt{\Lambda}} \sum_{k \in \Z^d} \sup_{x \in \omega \cap B( k,R)} \left|(\Pi_{\Lambda} u)(x) \right|\qquad \forall u \in L^2(\R^d).
\end{equation}

For every $\gamma$-thick set $\omega \subset \R^d$ at scale $R$, there exists $C=C(\Lambda_1, \Lambda_2, \|V\|_{L^{\infty}},R,\gamma)>0$ such that for every $\Lambda >0$, we have 
\begin{equation}
\label{eq:measurableRd}
   \norme{\Pi_{\Lambda} u}_{L^{2}(\R^d)} \leq C e^{C \sqrt{\Lambda}} \norme{ \Pi_{\Lambda} u }_{L^2(\omega)}\qquad \forall u \in L^2(\R^d).
\end{equation}

\end{theorem}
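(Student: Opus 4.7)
The proof proceeds in the spirit of Burq--Moyano \cite{BM21}, substituting their reliance on the Logunov--Malinnikova gradient estimate by our \Cref{cor:PropagationSmallness_withPotential} in order to accommodate the bounded potential $V$. As a first reduction, I would shift $V$ by a constant $c > \|V\|_{\infty}$, so that $H := H_{g,V,\kappa}+c \geq 1$; this only relabels the spectral window in \eqref{eq:defspectralRd} and makes $H$ a positive self-adjoint operator on $L^{2}(\R^{d},\kappa\,dx)$ on which the functional calculus of $\sinh(t\sqrt{H})/\sqrt{H}$ is unambiguous. Second, I would deduce \eqref{eq:measurableRd} from \eqref{eq:hausdorffRd} applied with $\delta = 0$: a $\gamma$-thick set at scale $R$ is $(c_{d}\gamma R^{d}, d)$-uniformly distributed at the same scale by equivalence of Lebesgue measure and $d$-Hausdorff content, and the resulting sum of suprema is then converted into $\|\Pi_{\Lambda} u\|_{L^{2}(\omega)}$ by a standard Kovrijkine-type reverse inequality on each ball combined with an interior $L^{\infty}$ bound.

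For the main estimate \eqref{eq:hausdorffRd}, I set $f := \Pi_{\Lambda} u$ and introduce the elliptic \emph{sinh lift}
\[
\hat{u}(x,t) := \frac{\sinh(t\sqrt{H})}{\sqrt{H}}\,f(x),
\]
defined via self-adjoint functional calculus. A direct computation shows $\hat{u}(x,0) = 0$, $\partial_{t}\hat{u}(x,0) = f(x)$, and
\[
-\divergence_{x}\bigl(\kappa g^{-1}\nabla_{x}\hat{u}\bigr) - \kappa\,\partial_{tt}\hat{u} + \kappa(V+c)\hat{u} = 0 \quad \text{on } \R^{d}\times(-1,1),
\]
which is exactly the framework of \Cref{cor:PropagationSmallness_withPotential} (matrix $\kappa g^{-1}$, density $\kappa$, non-negative potential $\kappa(V+c)$). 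The spectral theorem yields $\|\hat{u}\|_{L^{2}(\R^{d}\times(-2,2))}\leq C e^{C\sqrt{\Lambda}}\|f\|_{L^{2}(\R^{d})}$, and interior elliptic regularity (uniform in $k$ by translation) upgrades this to
\[
\|\hat{u}\|_{W^{1,\infty}(B(k,2R)\times(-1,1))} \leq C\,e^{C\sqrt{\Lambda}}\,F,\qquad k\in\Z^{d},\; F:=\|f\|_{L^{2}(\R^{d})}.
\]

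Applying \Cref{cor:PropagationSmallness_withPotential} on each $\Omega = B(k,2R)$ with $\mathcal{K} = B(k,R)$ and $E = \omega\cap B(k,R)$ (whose Hausdorff content is $\geq m$ by hypothesis) then produces, with $B_{k} := \sup_{\omega\cap B(k,R)}|f|$,
\[
\sup_{B(k,R)}|f| \leq C\,B_{k}^{\alpha}\,\bigl(e^{C\sqrt{\Lambda}}F\bigr)^{1-\alpha},
\]
for some $\alpha\in(0,1)$ independent of $k$. Squaring, integrating over $B(k,R)$ and summing against the finite overlap of $\{B(k,R)\}_{k\in\Z^{d}}$ yields
\[
F^{2\alpha} \leq C R^{d}\,e^{C'\sqrt{\Lambda}}\sum_{k\in\Z^{d}}B_{k}^{2\alpha}.
\]
As soon as $2\alpha\geq 1$, the sequence embedding $\ell^{1}\hookrightarrow \ell^{2\alpha}$ gives $(\sum_{k}B_{k}^{2\alpha})^{1/(2\alpha)}\leq \sum_{k}B_{k}$, which is \eqref{eq:hausdorffRd}.

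The technical heart of the argument lies in this last summation step. A priori, the propagation-of-smallness exponent $\alpha$ from \Cref{cor:PropagationSmallness_withPotential} need not satisfy $2\alpha\geq 1$, in which case the crude $\ell^{2\alpha}$--$\ell^{1}$ passage fails. The standard workaround is a good/bad cube dichotomy \emph{à la} Kovrijkine \cite{Kov01}, Wang--Wang--Zhang--Zhang \cite{WWZZ19} and Burq--Moyano \cite{BM21}: on ``bad'' cubes where $f$ concentrates, the pointwise interpolation from \Cref{cor:PropagationSmallness_withPotential} is applied at full strength, while on ``good'' cubes the $L^{2}$-mass of $f$ is controllably small and can be absorbed back into the left-hand side. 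Implementing this dichotomy in our setting with the potential $V$ is the main technical obstacle; by contrast, the sinh lift, the global spectral estimate for $\hat{u}$, the interior regularity, and the local application of \Cref{cor:PropagationSmallness_withPotential} are by now rather standard.
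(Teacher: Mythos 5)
Your overall framework — the $\sinh$ lift, the local application of \Cref{cor:PropagationSmallness_withPotential} on each ball $B(k,R)$, and the summation over $k\in\Z^d$ — is the same as the paper's. However, there is a genuine gap in the way you handle the summation step, and it is precisely the gap that the paper's argument is designed to avoid.

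You discard the locality of the $L^2$ bound too early. Interior elliptic regularity (and the spectral theorem for the $\sinh$ lift) gives, \emph{locally},
\[
\|F_{\lambda}\|_{W^{1,\infty}(B(k,2R)\times(-R,R))} \leq C\,\|F_{\lambda}\|_{L^2(B_{d+1}((k,0),5R))},
\]
but you immediately dominate the right-hand side by the global quantity $e^{C\sqrt{\Lambda}}\|f\|_{L^{2}(\R^{d})}$. Once the $k$-dependence is gone, the interpolation yields $\sup_{B(k,R)}|f| \leq C B_k^{\alpha}\bigl(e^{C\sqrt{\Lambda}}F\bigr)^{1-\alpha}$, and after squaring and summing you are forced to compare $\sum_k B_k^{2\alpha}$ with $\bigl(\sum_k B_k\bigr)^{2\alpha}$. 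That comparison only goes the right way when $2\alpha\geq 1$, a condition you have no control over. You then propose a Kovrijkine-type good/bad cube dichotomy as the ``standard workaround.'' This is not what \cite{BM21} does, and in this Lipschitz setting it is far from clear that it \emph{can} be done: Kovrijkine's dichotomy hinges on the analyticity of bandlimited functions (Bernstein/Remez on each ball), which is unavailable for $\Pi_{\Lambda}$ associated with a merely Lipschitz metric.

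The paper's resolution is simpler and works for every $\alpha\in(0,1)$: keep the local norm in the interpolation, so that
\[
\|\Pi_{\Lambda}f\|^2_{L^2(B(k,R))} \leq C\,B_k^{2\alpha}\,\|F_{\lambda}\|^{2(1-\alpha)}_{L^2(B_{d+1}((k,0),5R))},
\]
apply Young's inequality to split the product into a sum $C\varepsilon^{-\beta}B_k^2 + \varepsilon \|F_{\lambda}\|^2_{L^2(B_{d+1}((k,0),5R))}$, and sum over $k$. Because the local $L^2$ norms have bounded overlap, the second term telescopes to $\varepsilon\, C(d)\,\|F_{\lambda}\|^2_{L^2(\R^d\times(-5R,5R))}$, which is then bounded by $\varepsilon\, C e^{C\sqrt{\Lambda}}\|\Pi_{\Lambda}f\|^2_{L^2(\R^d)}$ and absorbed into the left-hand side by choosing $\varepsilon\sim e^{-C\sqrt{\Lambda}}$. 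No constraint on $\alpha$, no good/bad decomposition. The same remark applies to your reduction from \eqref{eq:hausdorffRd} to \eqref{eq:measurableRd}: a ``Kovrijkine-type reverse inequality'' is not needed. The paper simply restricts to the subset $\tilde\omega_k\subset\omega\cap B(k,R)$ on which $|f|^2$ lies below twice its $L^2$-average over $\omega\cap B(k,R)$; by Chebyshev this set retains at least half the measure, hence is still $\gamma/2$-thick, and applying \eqref{eq:hausdorffRd} to $\tilde\omega=\bigcup_k\tilde\omega_k$ converts the suprema directly into $L^2$ averages.
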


The following remarks are in order. First, Theorem \ref{thm:spectralRd} generalizes \cite[Theorem 1]{BM21} to the case of the Schrödinger operator as in \eqref{eq:defHavRd}. Note that \eqref{eq:measurableRd} has been previously established in \cite{LM19} assuming some analyticity condition on the potential $V$ and in \cite{SSY23} in the one-dimensional case by exploiting the recent improvement of propagation of smallness for solutions to elliptic equations in dimension $d=2$ in \cite{Zhu23}. See also \cite{AS23}, \cite{Wan24} and \cite{Zhu24} for the case of unbounded potentials $V$. Note in particular that \cite{Zhu24} obtains \eqref{eq:measurableRd} for Lipschitz bounded potentials $V$ by an adaptation of the propagation of smallness argument for gradients of \cite[Theorem 5.1]{LM18} but his approach is different from us because it consists in putting the zero-order perturbation term $V$ in the principal part of the elliptic operator, that explains the Lipschitz assumption on $V$. The same remark as in the compact setting can be done for the parameter $\delta \in (0,1)$ appearing for the proof of \eqref{eq:hausdorffRd}.

The strategy of the proof of Theorem \ref{thm:spectralRd} will follow the one of \cite[Theorem 1]{BM21} and again the new difficulty comes from the fact that the results of \cite{LM18} are actually proved for divergence elliptic operators. \medskip

We now focus on the evolution equation
    \begin{equation}
	\label{eq:heat_Rd}
		\left\{
			\begin{array}{ll}
				  \partial_t u  + H_{g,V, \kappa} u=0 & \text{ in }  (0,+\infty) \times \R^d, \\
				u(0, \cdot) = u_0 & \text{ in }\R^d.
			\end{array}
		\right.
\end{equation}

Our second main result is the establishment of the following observability inequalities.
\begin{theorem}
\label{thm:obsRd}
Let $R>0$ be such that \eqref{eq:recouvrement} holds, $0<\gamma \leq 1$ and $m>0$.

There exists $\delta_d \in (0,1)$ such that for all $\delta \in [0, \delta_d]$ and for every $(m, d-\delta)$-uniformly distributed set $\omega \subset \rr^d$ at scale $R$, there exists $C=C(\Lambda_1, \Lambda_2, \|V\|_{L^{\infty}},R,m, \delta)>0$ such that for every $T \in (0,1)$ and $u_0 \in L^2(\rr^d)$, the mild solution $u \in C([0,T];L^2(\R^d))$ of \eqref{eq:heat_Rd} satisfies 
\begin{equation}
\label{eq:obshaussdorff} 
\| u(T, \cdot) \|^2_{L^2(\R^d)} \leq  C e^{\frac CT} \sum_{k \in \mathbb{Z}^d} \int_0^T \sup_{ x\in \omega \cap B( k, R)} | u|^2(t,x)  dt.
\end{equation}

For every $\gamma$-thick set $\omega \subset \R^d$ at scale $R$, there exists $C=C(\Lambda_1, \Lambda_2, \|V\|_{L^{\infty}},R,\gamma)>0$ such that for every $T \in (0,1)$ and $u_0 \in L^2(\R^d)$, the solution $u \in C([0,T];L^2(\R^d))$ of \eqref{eq:heat_Rd} satisfies
\begin{equation}
\label{eq:obsheatRd}
    \|u(T, \cdot)\|^2_{L^2(\R^d)} \leq C e^{\frac CT} \int_0^T \|u(t, \cdot)\|^2_{L^2(\omega)} dt.
\end{equation}

\end{theorem}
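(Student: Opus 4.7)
The plan is to derive both observability inequalities from the corresponding spectral estimates of \Cref{thm:spectralRd} via the Lebeau--Robbiano iterative method, combined with the dissipation of the parabolic semigroup. The key ingredient beyond \Cref{thm:spectralRd} is the standard high-frequency dissipation
\[
\|(\mathrm{Id}-\Pi_\Lambda) u(t)\|_{L^2(\rr^d,\kappa dx)} \leq e^{-(\Lambda-\|V\|_\infty)t}\|(\mathrm{Id}-\Pi_\Lambda) u_0\|_{L^2(\rr^d,\kappa dx)},
\]
which holds because $H_{g,V,\kappa}$ is self-adjoint on $L^2(\rr^d,\kappa dx)$ and bounded from below by $-\|V\|_\infty$. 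Since $\kappa$ is comparable to $1$ by \eqref{eq:hypothesiskappa}, this is equivalent to the analogous $L^2(\rr^d)$ estimate up to a multiplicative constant depending only on $\Lambda_1$.

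For the thick case \eqref{eq:obsheatRd}, the spectral inequality \eqref{eq:measurableRd} is precisely a $L^2$--$L^2$ estimate of the form required by the abstract frameworks of Miller \cite{Mil10} and Beauchard--Pravda-Starov \cite{BPS18}. I would therefore invoke \cite[Theorem 2.2]{Mil10} (or \cite[Theorem 2.1]{BPS18}) directly, which, upon combining the spectral estimate with cost $Ce^{C\sqrt{\Lambda}}$ and the above dissipation, produces the observability inequality with cost $Ce^{C/T}$ for $T\in(0,1)$ through an optimization on a dyadic sequence of cutoff frequencies $\Lambda_j\sim 2^j$ and shrinking time intervals of length $\sim 2^{-j/2}$.

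For the Hausdorff case \eqref{eq:obshaussdorff}, the right-hand side of \eqref{eq:hausdorffRd} is not an $L^2$-norm of $\Pi_\Lambda u$ over a set but the seminorm
\[
N(\phi) \;:=\; \sum_{k\in\Z^d}\, \sup_{x\in\omega\cap B(k,R)}|\phi(x)|,
\]
so Miller's abstract framework does not apply verbatim. I would instead follow line by line the adaptation carried out by Burq--Moyano in \cite[Section 4]{BM23} and \cite[Section 4]{BM21}: run the same Lebeau--Robbiano dyadic telescoping argument but measure the observation through the functional $\phi \mapsto N(\phi)$ rather than the $L^2(\omega)$-norm. At each step, the spectral estimate \eqref{eq:hausdorffRd} is used on $\Pi_{\Lambda_j}u(t_j)$ against this seminorm, and $\Pi_{\Lambda_j}u(t_j)$ is pointwise well-defined by the regularizing effect of the semigroup, so squaring and integrating in time yields precisely the sum-of-sup expression on the right-hand side of \eqref{eq:obshaussdorff}.

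The main obstacle is bookkeeping rather than any single sharp estimate: one must check that the Cauchy--Schwarz-type bound used to trade $\int_0^T(\cdots)\,dt$ against $\big(\int_0^T(\cdots)^2\,dt\big)^{1/2}$ is compatible with the infinite sum over $k\in\Z^d$, and that the potential $V$ only contributes a harmless shift of the spectrum absorbed into the $Ce^{C\sqrt{\Lambda}}$ constant; once the spectral estimates of \Cref{thm:spectralRd} are granted, no new analytic input beyond the arguments of \cite{BM23,BM21,Mil10} is needed, and the Lipschitz metric $g$ together with the bounded potential $V$ and Lipschitz density $\kappa$ enter only through the constants.
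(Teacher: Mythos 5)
Your proposal matches the paper's own route: the paper does not give a self-contained proof but explicitly states that \eqref{eq:obsheatRd} follows from the spectral estimate \eqref{eq:measurableRd} by the Lebeau--Robbiano method as implemented in \cite{Mil10} and \cite{BPS18}, and that \eqref{eq:obshaussdorff} follows from \eqref{eq:hausdorffRd} by the adaptation in \cite{BM21,BM23}, referring the reader to \cite{BM21} for the complete argument. Your identification of the high-frequency dissipation bound as the only additional ingredient, and your observation that the sum-of-sup seminorm requires tracking the functional through the dyadic iteration rather than applying Miller's framework verbatim, are both consistent with what the cited references carry out.
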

 This result is in particular a generalization of \cite{DYZ21} and \cite{BM21} by the adding of the zero-order term $V$. This constitutes also an improvement of \cite{NTTV20} and \cite{DWZ20} which consider the case where $\omega$ is a union of disjoint open balls. Note that the thickness condition turns out to be necessary for \eqref{eq:obsheatRd} for $g=I_d$ and $V=0$  by recalling \cite{EV18} and \cite{WWZZ19}. Other necessary and sufficient conditions were derived in \cite{BEPS20} when looking at controls of the form $h 1_{\omega(t)}$. For a complete proof of \Cref{thm:obsRd} from the spectral estimates of \Cref{thm:spectralRd}, see \cite{BM21} and the references therein.
\medskip

For a measurable $\omega \subset \R^d$, we finally focus on the controlled system
    \begin{equation}
	\label{eq:heat_Rd_controlled}
		\left\{
			\begin{array}{ll}
				  \partial_t y  + H_{g,V, \kappa} y= h 1_{\omega} & \text{ in }  (0,+\infty) \times \R^d, \\
				y(0, \cdot) = y_0 & \text{ in }\R^d,
			\end{array}
		\right.
\end{equation}
In \eqref{eq:heat_Rd_controlled}, at time $t \in [0,+\infty)$, $y(t,\cdot) : \R^d \to \R$ is the state and $h(t,\cdot) : \omega \to \R$ is the control.

Let us denote by $\mathcal M(\rr^d)$ the space of Borel measures on $\rr^d$. Our last main result provides null-controllability results for \eqref{eq:heat_Rd_controlled}.
\begin{theorem}
\label{thm:controlledRd}
Let $R>0$ be such that \eqref{eq:recouvrement} holds, $0<\gamma \leq 1$ and $m>0$.

There exists $\delta_d \in (0,1)$ such that for all $\delta \in [0, \delta_d]$ and for every $(m, d-\delta)$-uniformly distributed set $\omega \subset \rr^d$ at scale $R$, there exists $C=C(g,V, \kappa,R,m, \delta)>0$ such that for every $T \in (0,1)$ and $y_0 \in L^2(\R^d)$, there exists $h \in L^2(0,T;\mathcal M(\R^d))$ supported in $(0,T)\times \omega$ satisfying
\begin{equation}
    \label{eq:estimatecontrolmanifoldswildRd}
   \sum_{k \in \mathbb{Z}^d}  \int_0^T \|h(t)\|_{\mathcal M(B(k,R))}^2 dt \leq C  e^{\frac CT} \norme{y_0}_{L^2(\R^d)}^2,
\end{equation}
such that the solution $y \in C([0,T];L^2(\R^d))$ of \eqref{eq:heat_Rd_controlled} satisfies $y(T,\cdot) = 0$.

For every $\gamma$-thick set $\omega \subset \R^d$ at scale $R$, there exists $C=C(g,V,\kappa,R,\gamma)>0$ such that for every $T \in (0,1)$ and $y_0 \in L^2(\R^d)$, there $h \in L^2(0,T;L^2(\omega))$ satisfying
\begin{equation}
    \label{eq:estimatecontrolRd}
    \norme{h}_{L^2(0,T;L^2(\omega))} \leq C  e^{\frac CT} \norme{y_0}_{L^2(\R^d)},
\end{equation}
such that the solution $y \in C([0,T];L^2(\R^d))$ of \eqref{eq:heat_Rd_controlled} satisfies $y(T,\cdot) = 0$.

\end{theorem}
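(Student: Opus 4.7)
The plan is to apply the Hilbert Uniqueness Method (HUM) to each of the two observability inequalities of Theorem \ref{thm:obsRd}. Since $H_{g,V,\kappa}$ is self-adjoint on $L^2(\R^d, \kappa dx)$, the adjoint backward equation associated with \eqref{eq:heat_Rd_controlled} coincides, up to time reversal, with the free equation \eqref{eq:heat_Rd}. Null-controllability in time $T$ of \eqref{eq:heat_Rd_controlled} with controls supported in $\omega$ is therefore equivalent by duality to observability of \eqref{eq:heat_Rd} on $(0,T) \times \omega$, together with the matching cost estimate.

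For the thick case, I would argue via the classical $L^2$ HUM functional
\[
J(\phi_T) = \frac12 \int_0^T \|\phi(t, \cdot)\|^2_{L^2(\omega)} \, dt + \langle y_0, \phi(0, \cdot) \rangle_{L^2(\R^d, \kappa dx)},
\]
where $\phi$ solves the backward homogeneous equation with final datum $\phi_T$. The observability inequality \eqref{eq:obsheatRd} provides coercivity of $J$ with constant $C e^{C/T}$; the unique minimizer $\phi_T^\star$ yields a control $h = \phi^\star 1_\omega \in L^2(0,T; L^2(\omega))$ satisfying \eqref{eq:estimatecontrolRd} and driving $y_0$ to zero at time $T$. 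For the uniformly distributed case, the scheme is identical but with the natural observation norm
\[
\|\phi\|_{\mathrm{obs}}^2 := \sum_{k \in \Z^d} \int_0^T \sup_{x \in \omega \cap B(k,R)} |\phi(t,x)|^2 \, dt
\]
replacing the $L^2(0,T; L^2(\omega))$ norm. Its topological dual identifies with the space of time-dependent Borel measures $h$ on $\R^d$ supported in $\omega$ satisfying $\sum_{k \in \Z^d} \int_0^T \|h(t)\|_{\mathcal M(B(k,R))}^2 \, dt < \infty$, via Riesz representation on each ball combined with $\ell^2$ duality in the index $k$. Minimizing the corresponding functional yields a minimizer $\phi^\star$, and a selection in the subdifferential of $\tfrac12 \|\cdot\|_{\mathrm{obs}}^2$ at $\phi^\star$ provides a control $h \in L^2(0, T; \mathcal M(\R^d))$ satisfying \eqref{eq:estimatecontrolmanifoldswildRd} and steering $y_0$ to zero.

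The main obstacle is the careful identification of this predual in the measure case: the balls $B(k,R)$ overlap in the covering \eqref{eq:recouvrement}, so the decomposition of a measure-valued control into local pieces $h_k \in \mathcal M(B(k,R))$ is not unique, and one has to use a locally finite partition of unity subordinate to $(B(k,R))_{k \in \Z^d}$ to reassemble the pieces into a single global measure while preserving the $\ell^2$-type summation. This is precisely the construction performed in \cite[Section 5]{BM23} in the compact setting and in \cite{BM21} on $\R^d$ without potential; the bounded zero-order term $V$ does not alter this functional framework, only the observability constant already absorbed in Theorem \ref{thm:obsRd}, so the argument transfers verbatim to the Schrödinger setting considered here.
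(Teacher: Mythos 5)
The proposal is correct and follows essentially the same route as the paper, which simply observes that Theorem~\ref{thm:controlledRd} follows from the observability inequalities of Theorem~\ref{thm:obsRd} by HUM duality and refers to \cite{BM21} (and \cite[Section 5]{BM23}) for the details. Your outline of the HUM functional in the $L^2$ case, the predual identification for the measure-valued case via Riesz representation on each ball and a partition of unity to handle the overlaps, and the selection in the subdifferential of the nonsmooth observation norm, is precisely the content of those references.
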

In \eqref{eq:estimatecontrolmanifoldswildRd}, $\|h(t)\|_{\mathcal M(B(k,R))}$ is the norm of the restriction of $h(t)$ on $B(k,R)$, which is therefore supported on $\omega \cap B(k,R)$, defined as 
$$\|h(t)\|_{\mathcal M(B(k,R))} = \sup_{f \in C^0(B(k,R))} \frac{\int_{B(k,R)} f dh(t)}{\|f\|_{L^{\infty}(B(k,R))}}.$$
For a complete proof of \Cref{thm:controlledRd} from the observability inequalities of \Cref{thm:obsRd}, see \cite{BM21} and the references therein.

\section{Proof of propagation of smallness for Schrödinger operators}

The goal of this section is to establish quantitative propagation of smallness for solutions to Schrödinger operators. 

Let $\Omega$ be a bounded domain of $\R^d$. We consider the elliptic operator $H_{A,V}$ defined in \eqref{eq:defHavOmega}, with the Lipschitz assumption on $A$, i.e. \eqref{eq:LipschitzAOmega} and the boundedness assumption on $V$, i.e. \eqref{eq:hypothesisVOmega}. Moreover, as explained under Theorem~\ref{cor:PropagationSmallness_withPotential}, one can assume that $V \geq 0$.\medskip

\textbf{Reduction to a smooth bounded domain $\Omega_0$.} Given $\mathcal{K}$, $E$ such that \eqref{eq:distanceandhaussdorffV} or \eqref{eq:distanceandhaussdorffgradientV} hold then from \cite[Proposition 8.2.1]{Dan08}, one can find a $C^{\infty}$-domain $\Omega_0$ such that 
\begin{equation}
\label{eq:Omega0}
 \text{dist}( \mathcal K, \partial \Omega_0) \geq \rho/2,\  \text{dist}(E, \partial \Omega_0) \geq \rho/2,\ \text{and}\ \Omega_0 \subset \subset \Omega.
\end{equation}

\subsection{Existence of a positive multiplier and reduction to divergence form}

In this part, we will construct the positive multiplier $\phi$ in $\Omega_0$ then reduce the Schrödinger type equation to a divergence elliptic equation in $\Omega_0$ with the help of the multiplier.

The following result is quite standard.

\begin{lemma}
\label{lem:auxiliary_fonction_for_elliptic_equation}
There exists $\phi \in W^{1, \infty}(\Omega_0)$ satisfying
\begin{equation}
	\label{eq:equationmultiplier}
				- \divergence \left(A(x) \nabla \phi \right) + V(x) \phi=0\  \text{in}\ \Omega_0.
\end{equation}
Moreover, there exists $c=c(\Omega,\Omega_0,\Lambda_1, \Lambda_2,\|V\|_{\infty})>0$ and $C =C(\Omega,\Omega_0,\Lambda_1, \Lambda_2,\|V\|_{\infty})>0 $ such that
\begin{equation}
\label{eq:lowerbound}
    \phi \geq c \ \text{in}\ \Omega_0, 
\end{equation}
and
\begin{equation}
\label{eq:Winftyboundphi}
    \|\phi\|_{W^{1,\infty}(\Omega_0)} \leq C.
\end{equation}
\end{lemma}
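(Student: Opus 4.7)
The plan is to construct $\phi$ as the solution of a Dirichlet problem with constant boundary data $1$ on a slightly larger smooth domain, and then to invoke weak/strong maximum principles, De Giorgi--Nash--Moser regularity, and standard elliptic estimates to extract the quantitative bounds.

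More precisely, I would first pick an intermediate smooth bounded domain $\Omega_1$ with $\Omega_0 \subset\subset \Omega_1 \subset\subset \Omega$. Working under the reduction $V\geq 0$ already recorded above, the bilinear form $a(u,v)=\int_{\Omega_1} \langle A\nabla u,\nabla v\rangle + V u v$ is continuous and coercive on $H^1_0(\Omega_1)$ (coercivity is where $V\geq 0$ enters). Writing the sought $\phi$ as $\phi = 1+\psi$ with $\psi\in H^1_0(\Omega_1)$, Lax--Milgram produces a unique weak $\psi$ solving $-\divergence(A\nabla\psi)+V\psi=-V$, hence a unique weak solution $\phi\in H^1(\Omega_1)$ of
\begin{equation*}
-\divergence(A\nabla\phi)+V\phi = 0 \text{ in } \Omega_1, \qquad \phi=1 \text{ on } \partial\Omega_1.
\end{equation*}

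Next I would extract the pointwise bounds. Testing the equation against $(\phi-1)^{+}\in H^1_0(\Omega_1)$ and using $V\geq 0$ gives $\int A\nabla(\phi-1)^{+}\cdot\nabla(\phi-1)^{+}\leq 0$, so $\phi\leq 1$; testing against $\phi^{-}=\max(-\phi,0)$ gives $\phi\geq 0$ by the same device. To upgrade $\phi\geq 0$ to a uniform lower bound on $\Omega_0$, I would use De Giorgi--Nash--Moser: since $A$ is bounded and uniformly elliptic and $V\in L^\infty$, weak solutions are Hölder continuous in the interior, with modulus depending only on $\Lambda_1,\Lambda_2,\|V\|_\infty$ and the geometry. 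The strong maximum principle (or equivalently the Harnack inequality of Moser) applied to the nonnegative, non-identically-zero solution $\phi$ then yields $\phi>0$ in $\Omega_1$, and hence by continuity $c := \min_{\overline{\Omega_0}}\phi > 0$; quantitatively, Harnack gives $\sup_{\Omega_0}\phi \leq C_H \inf_{\Omega_0}\phi$ with $C_H$ depending only on the admissible parameters and on $\mathrm{dist}(\Omega_0,\partial\Omega_1)$, so combining with $\phi\equiv 1$ on $\partial\Omega_1$ produces an explicit lower bound $c=c(\Omega,\Omega_0,\Lambda_1,\Lambda_2,\|V\|_\infty)$.

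Finally, for the $W^{1,\infty}$ estimate \eqref{eq:Winftyboundphi}, I would invoke standard interior regularity for divergence-form elliptic operators with Lipschitz principal part and bounded zero-order term: one obtains $\phi\in W^{2,p}_{\mathrm{loc}}(\Omega_1)$ for every $p<\infty$ (or equivalently $C^{1,\alpha}_{\mathrm{loc}}$), with the quantitative bound
\begin{equation*}
\|\phi\|_{W^{1,\infty}(\Omega_0)} \leq C\bigl(\|\phi\|_{L^\infty(\Omega_1)} + \|V\phi\|_{L^\infty(\Omega_1)}\bigr)\leq C',
\end{equation*}
where $C'$ depends only on $\Omega,\Omega_0,\Lambda_1,\Lambda_2,\|V\|_\infty$ (through the Lipschitz norm of $A$ and the separation between $\Omega_0$ and $\partial\Omega_1$), using the already established bound $\phi\leq 1$.

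The only genuinely delicate step is the quantitative lower bound $\phi\geq c$: one needs the constant to be controlled only by the ellipticity, Lipschitz, $L^\infty$ data, and the geometry, which is exactly what Moser's Harnack inequality provides. The positivity itself is immediate from the strong maximum principle once continuity is in hand, but tracking the dependence of the Harnack constant, and ensuring that $\Omega_1$ can be chosen depending only on $\Omega$ and $\Omega_0$ (so that all the geometric constants indeed only depend on the parameters listed in the statement), is the main point to verify carefully.
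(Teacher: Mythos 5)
Your construction of $\phi$ (Dirichlet problem on a slightly larger smooth domain with boundary data $1$, after the reduction $V\ge 0$), the upper bound $\phi\le 1$, and the $W^{1,\infty}$ estimate via interior $W^{2,p}$ regularity plus Sobolev embedding all match the paper in spirit. The one place where you diverge is the quantitative lower bound $\phi\ge c$ on $\Omega_0$, and there your argument has a genuine gap.

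Moser's Harnack inequality is a purely interior estimate: it tells you $\sup_{\Omega_0}\phi \le C_H\inf_{\Omega_0}\phi$, i.e.\ it controls the oscillation of $\phi$ on $\Omega_0$, but it gives no information on the \emph{size} of $\phi$ there unless you already know $\sup_{\Omega_0}\phi$ is bounded below. The boundary condition $\phi\equiv 1$ on $\partial\Omega_1$ cannot be ``combined'' with interior Harnack on $\Omega_0$ in the way you assert, because $\Omega_0\subset\subset\Omega_1$ is separated from $\partial\Omega_1$ and Harnack says nothing across that gap. To transport positivity from the boundary to $\Omega_0$ you would need either (i) a boundary barrier showing that $\phi$ remains bounded below in a quantitative collar neighbourhood of $\partial\Omega_1$, followed by a Harnack chain into $\Omega_0$, or (ii) a direct global subsolution. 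Option (i) is repairable but already requires a barrier; option (ii) is what the paper actually does: it takes the explicit subsolution $\phi_-(x)=\exp(-\lambda(x_1-x_0))$ (with $\lambda$ chosen from $\Lambda_1,\Lambda_2,\|V\|_\infty$) and the supersolution $\phi_+\equiv 1$, and applies the weak maximum principle to get $\phi_-\le\phi\le 1$ on all of the ambient domain, which immediately yields the uniform lower bound $c=\min_{\overline{\Omega_0}}\phi_-$. This is both simpler and already fully quantitative, avoiding any Harnack or strong maximum principle input. You should replace the Harnack step with such a barrier argument (or carry out the boundary-estimate-plus-chain argument in full); as written, the lower bound does not follow.
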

\begin{proof}
We first solve the boundary elliptic problem, by \cite[Theorem 8.3]{GT01}, there exists $\phi \in W^{1,2}(\Omega)$,
\begin{equation}
	\label{eq:equationmultiplierBoundary}
				- \divergence \left(A(x) \nabla \phi \right) + V(x) \phi=0\  \text{in}\ \Omega,\qquad \phi = 1\ \text{on}\ \partial\Omega.
\end{equation}

Let us take $x_0 \in \R^d$ such that for every $x = (x_1, \dots, x_d) \in \Omega$, we have $x_1 - x_0 \geq 0$. Let $\lambda \geq 0$ and let us then define
$$ \phi_{-}(x) = \exp(-\lambda(x_1-x_0)),\ \phi_{+}(x) = 1\qquad \forall x=(x_1, \dots, x_d) \in \Omega.$$
Then, $\phi_{-}$, respectively $\phi_{+}$, is a subsolution, respectively a supersolution, to \eqref{eq:equationmultiplierBoundary} for some $\lambda >0$ depending on $\Lambda_1, \Lambda_2, \|V\|_{\infty}$, that is
$$ - \divergence \left(A(x) \nabla \phi_{-} \right) + V(x) \phi_{-}\leq 0\  \text{in}\ \Omega,\qquad \phi_{-} \leq 1\ \text{on}\ \partial\Omega,$$
and
$$ - \divergence \left(A(x) \nabla \phi_{+} \right) + V(x) \phi_{+}\geq 0\  \text{in}\ \Omega,\qquad \phi_{+} \geq 1\ \text{on}\ \partial\Omega.$$
So by the weak maximum principle stated in \cite[Theorem 8.1]{GT01}, we have that
$$ \phi_{-}(x) \leq \phi(x) \leq \phi_{+}(x) \qquad \forall x \in \Omega.$$
In particular, this proves that there exists $c=c(\Omega,\Omega_0,\Lambda_1, \Lambda_2,\|V\|_{\infty})>0$ such that 
\begin{equation}
    \label{eq:boundphi}
    c \leq \phi(x) \leq 1\qquad \forall x \in \Omega.
\end{equation}
In particular, \eqref{eq:boundphi} implies \eqref{eq:lowerbound}.

Moreover, from local $W^{2,p}$-regularity estimate from \cite[Theorem 9.11]{GT01} and \eqref{eq:boundphi}, we have that for $1 < p < +\infty$, there exists $C=C(\Omega,\Omega_0,\Lambda_1, \Lambda_2,\|V\|_{\infty},p)>0$ such that
$$ \|\phi\|_{W^{2,p}(\Omega_0)} \leq C  \|\phi\|_{L^p(\Omega)} \leq C |\Omega|^{1/p} \|\phi\|_{L^{\infty}(\Omega)} \leq C |\Omega|^{1/p+1} .$$
By taking $p$ sufficiently large and by using Sobolev embeddings \cite[Theorem 7.26]{GT01} to guarantee that $W^{2,p}(\Omega_0) \hookrightarrow W^{1, \infty}(\Omega_0)$, we therefore deduce \eqref{eq:Winftyboundphi} from the previous estimate.
\end{proof}

We have the following result.
\begin{lemma}
\label{lem:reducediv}
Let $u \in W^{1,2}(\Omega)$ be a weak solution to 
\begin{equation}
    \label{eq:equationuOmega}
- \divergence \left(A(x) \nabla u \right) + V(x) u = 0\ \text{in}\ \Omega.
\end{equation}
Let $\phi$ be as in Lemma \ref{lem:auxiliary_fonction_for_elliptic_equation}. Then, $v = u/\phi \in W^{1,2}(\Omega_0)$ satisfies
\begin{equation}
\label{eq:equationv}
   -  \divergence \left(\phi^2 A \nabla v\right) = 0\ \text{in}\ \Omega_0.
\end{equation}
Moreover, the symmetric matrix $\hat{A} = \phi^2 A$ is uniformly elliptic and has Lipschitz entries, there exists $C_1=C(\Omega,\Omega_0,\Lambda_1, \Lambda_2,\|V\|_{\infty})>0$ and $C_2 =C(\Omega,\Omega_0,\Lambda_1, \Lambda_2,\|V\|_{\infty})>0$ such that
\begin{equation}
\label{eq:LipschitzAhatOmega}
    C_{1}^{-1} |\xi|^2 \leq \langle \hat{A}(x) \xi, \xi \rangle \leq   C_{1} |\xi|^2,\ |\hat{a}_{ij}(x) - \hat{a}_{ij}(y)| \leq  C_{2}\qquad \forall x, y \in \Omega_0,\ \forall \xi \in \R^d.
\end{equation}
\end{lemma}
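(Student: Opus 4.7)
The plan is to verify both claims by direct calculation, working in the weak formulation throughout. Since $\phi \geq c > 0$ on $\Omega_0$ by \eqref{eq:lowerbound} and $\phi \in W^{1,\infty}(\Omega_0)$ by \eqref{eq:Winftyboundphi}, the quotient $v = u/\phi$ belongs to $W^{1,2}(\Omega_0)$, with the chain-rule identity $\nabla v = \nabla u / \phi - u \nabla \phi / \phi^2$ holding almost everywhere. This yields the pointwise identity
\[
\phi^2 A \nabla v = \phi A \nabla u - u A \nabla \phi.
\]

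For the divergence equation \eqref{eq:equationv}, rather than formally manipulating distributions, I would directly test both source equations against compactly supported test functions multiplied by the other unknown. Fix $\psi \in C_c^{\infty}(\Omega_0)$. Since $\phi \psi$ and $u \psi$ both lie in $W^{1,2}$ with compact support in $\Omega$, they qualify as admissible test functions. Testing the weak form of \eqref{eq:equationuOmega} against $\phi \psi$ gives
\[
\int_{\Omega_0} \phi \, A \nabla u \cdot \nabla \psi + \int_{\Omega_0} \psi \, A \nabla u \cdot \nabla \phi + \int_{\Omega_0} V u \phi \psi = 0,
\]
while testing the weak form of \eqref{eq:equationmultiplier} against $u \psi$ gives
\[
\int_{\Omega_0} u \, A \nabla \phi \cdot \nabla \psi + \int_{\Omega_0} \psi \, A \nabla \phi \cdot \nabla u + \int_{\Omega_0} V \phi u \psi = 0.
\]
Subtracting, the zero-order $V$-terms cancel and, by the symmetry of $A$, so do the $\psi\, A\nabla u \cdot \nabla \phi$ terms. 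What remains is
\[
\int_{\Omega_0} \bigl( \phi A \nabla u - u A \nabla \phi \bigr) \cdot \nabla \psi \,=\, \int_{\Omega_0} \phi^2 A \nabla v \cdot \nabla \psi \,=\, 0,
\]
which is precisely the weak formulation of \eqref{eq:equationv}.

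For the ellipticity and Lipschitz properties of $\hat{A} = \phi^2 A$, the bounds $c \leq \phi \leq \|\phi\|_{L^\infty(\Omega_0)}$ from \eqref{eq:lowerbound} and \eqref{eq:Winftyboundphi} combined with \eqref{eq:LipschitzAOmega} immediately yield two-sided ellipticity constants $C_1$ depending only on the stated parameters. For the Lipschitz continuity of the entries, the splitting
\[
\hat{a}_{ij}(x) - \hat{a}_{ij}(y) = \phi^2(x)\bigl(a_{ij}(x) - a_{ij}(y)\bigr) + \bigl(\phi^2(x) - \phi^2(y)\bigr) a_{ij}(y)
\]
reduces the problem to controlling the Lipschitz seminorm of $\phi^2$ on $\Omega_0$, which is bounded by $2 \|\phi\|_{L^{\infty}(\Omega_0)} \|\nabla \phi\|_{L^{\infty}(\Omega_0)}$, and hence by \eqref{eq:Winftyboundphi}. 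The only point requiring attention is the weak-form bookkeeping in the first step, which is handled cleanly by the test-function choices above; no genuine obstacle arises.
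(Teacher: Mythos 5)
Your proposal is correct and supplies exactly the argument the paper leaves implicit: the paper merely says ``the proof is a straightforward computation at the variational formulation level so we omit it,'' and your weak-formulation calculation — testing \eqref{eq:equationuOmega} against $\phi\psi$, testing \eqref{eq:equationmultiplier} against $u\psi$, subtracting, and invoking the symmetry of $A$ so the cross terms and the $V$-terms cancel — is precisely that computation. The verification of the ellipticity and Lipschitz bounds via $c \leq \phi \leq \|\phi\|_{L^\infty}$ and the product decomposition for $\hat{a}_{ij}(x)-\hat{a}_{ij}(y)$ is likewise routine and correct (note the statement \eqref{eq:LipschitzAhatOmega} appears to have a typographical omission of the factor $|x-y|$, which your argument handles properly).
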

\begin{proof}
The proof is a straightforward computation at the variational formulation level so we omit it.
\end{proof}

\subsection{Propagation of smallness}
\label{sec:prop_smallness}

In this part, we assume that the Lipschitz assumption on $A$, i.e. \eqref{eq:LipschitzAOmega} and the boundedness assumption on $V$, i.e. \eqref{eq:hypothesisVOmega} still hold together with $V \geq 0$.

We first deal with propagation of smallness for solutions to elliptic equations. We have the following quantitative propagation of smallness for solutions to divergence elliptic equations from \cite{LM18}. 

\begin{theorem}[{\cite[Theorem 2.1]{LM18}}]\label{thm:LM18_propagation_smallness_Solutions}
Let $\rho, m, \delta >0$ and $\mathcal K, E \subset \Omega$, be measurable subsets such that 
\begin{equation}
    \label{eq:distanceandhaussdorff}
    \text{dist}( \mathcal K, \partial \Omega) \geq \rho,\quad  \text{dist}(E, \partial \Omega) \geq \rho \quad \text{and} \quad  \mathcal C^{d-1+\delta}_{\mathcal H}(E) \geq m.
\end{equation}
There exist $C=C(\Omega,  \Lambda_1, \Lambda_2, \rho, m, \delta)>0$ and $ \alpha=\alpha(\Omega,  \Lambda_1, \Lambda_2, \rho, m, \delta) \in (0,1)$ such that for every weak solution $u \in W^{1,2}(\Omega) \cap L^{\infty}(\Omega)$ of the elliptic equation
\begin{equation}
\label{eq:elliptic_divergence_form}
    - \nabla \cdot (A(x) \nabla u)=0 \ \text{in} \ \Omega,
\end{equation}
we have
\begin{equation}
\label{eq:PropagationSmallness_Divform_Solutions}
  \sup_{\mathcal K} | u| \leq C (\sup_{E} | u|)^{\alpha} (\sup_{\Omega} | u|)^{1-\alpha}.
\end{equation}
\end{theorem}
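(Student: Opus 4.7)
The plan is to mirror the Logunov--Malinnikova strategy, which combines a quantitative three-ball (Hadamard) inequality with a fine combinatorial covering argument exploiting the Hausdorff-content hypothesis on $E$. First, I would establish the three-ball inequality for weak solutions of \eqref{eq:elliptic_divergence_form}: there exist $C_0>0$ and $\beta\in(0,1)$ depending only on $\Lambda_1,\Lambda_2$ such that whenever $B(x_0,3r)\subset \Omega$,
\[
\sup_{B(x_0, 2r)} |u| \leq C_0 \bigl(\sup_{B(x_0,r)} |u|\bigr)^{\beta} \bigl(\sup_{B(x_0,3r)} |u|\bigr)^{1-\beta}.
\]
This is classical and follows from Carleman estimates for divergence elliptic operators with Lipschitz principal part, see \cite{LRL12}. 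Equivalently, one works with the logarithmic doubling index $N(u, B(x,r)) := \log_2\bigl(\sup_{B(x,2r)} |u|/\sup_{B(x,r)} |u|\bigr)$, which is the appropriate surrogate for the Almgren frequency in the variable-coefficient setting.

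The heart of the argument is then the Logunov combinatorial lemma -- the so-called ``number of cubes with large doubling index'' estimate. If a cube $Q \subset \Omega$ is subdivided into $A^d$ equal subcubes for a large integer $A$, there exists $\tau=\tau(d)>0$ such that the number of subcubes on which a suitable concentric enlargement of $N(u, \cdot)$ exceeds $\max\bigl(N(u,Q)/(1+c),\, N_0\bigr)$ is at most $A^{d-1-\tau}$. For divergence-form operators with Lipschitz coefficients, this is obtained by transferring Logunov's monotonicity/hyperplane arguments for harmonic functions (developed in \cite{Log18a, Log18b}) via an $A(x)$-dependent local flattening of the metric, at the price of degraded but acceptable constants.

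Finally, I would combine both ingredients by a scale-by-scale iteration. The combinatorial lemma implies that, as one refines a dyadic decomposition of a cube containing $\mathcal{K} \cup E$, the set of ``bad'' subcubes carrying an abnormally large doubling index has upper Minkowski dimension at most $d-1-\tau$; its $(d-1+\delta)$-Hausdorff content is therefore arbitrarily small once $\delta$ is chosen in terms of $\tau$. Since $\mathcal{C}^{d-1+\delta}_{\mathcal{H}}(E) \geq m$, at some finite scale a controlled positive-content portion of $E$ must sit inside ``good'' cubes on which the doubling is uniformly bounded; on these, the three-ball inequality propagates smallness quantitatively. Chaining along a fixed path of overlapping balls joining such a good cube to an arbitrary point of $\mathcal{K}$, which is possible because $\Omega$ is connected and $\mathrm{dist}(\mathcal{K} \cup E, \partial \Omega) \geq \rho$, and pigeonholing along the chain, produces the exponent $\alpha \in (0,1)$ and the inequality \eqref{eq:PropagationSmallness_Divform_Solutions}.

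The principal obstacle is the combinatorial step: this ``number of subcubes with large doubling index'' bound is precisely Logunov's breakthrough in \cite{Log18a, Log18b}, and its transfer to divergence-form equations with merely Lipschitz coefficients is delicate, because the Almgren frequency loses exact monotonicity and must be handled through Carleman-based almost-monotonicity estimates. Without this ingredient one would only recover propagation of smallness from open balls, as in \eqref{eq:threesphereB}, rather than from sets of positive codimension-less-than-one Hausdorff content as claimed in \eqref{eq:PropagationSmallness_Divform_Solutions}.
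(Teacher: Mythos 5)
The paper does not prove this theorem: Theorem~\ref{thm:LM18_propagation_smallness_Solutions} is quoted verbatim as \cite[Theorem 2.1]{LM18} and invoked as a black box. The authors say explicitly that their strategy is designed to \emph{avoid} reproducing the Logunov--Malinnikova machinery (``we decide to follow another easiest path that uses \cite[Theorems 2.1, 5.1]{LM18} as a black box''), so there is no proof in the paper to compare your attempt against.

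Your outline is a fair high-level account of the ingredients of the original LM18 argument: a quantitative three-sphere inequality from Carleman estimates, the doubling index as a surrogate for Almgren frequency, the combinatorial bound $A^{d-1-\tau}$ on the number of subcubes with large doubling index, and a scale-by-scale comparison against the $(d-1+\delta)$-Hausdorff content of $E$. But as written it is a roadmap rather than a proof, and you flag the gap yourself: the ``number of cubes with large doubling index'' lemma is the central technical achievement of \cite{Log18a,Log18b,LM18}, and its extension from the Laplacian to divergence-form operators with merely Lipschitz principal part is itself a substantial portion of \cite{LM18} --- it is not obtained by a one-line ``$A(x)$-dependent local flattening of the metric.'' Also, the intermediate claim that the bad set has upper Minkowski dimension $\leq d-1-\tau$ is a heuristic; the actual LM18 argument is a recursive estimate over a dyadic tree where the per-level count of bad cubes is controlled, and the Hausdorff-content hypothesis on $E$ is compared against the cumulative cost of bad cubes along the iteration. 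None of this bears on the paper, whose entire contribution around this point is to reduce the Schr\"odinger case (Theorems~\ref{cor:propagationsmallnessschrodinger} and \ref{cor:PropagationSmallness_withPotential}) to this cited result via the positive multiplier of Lemma~\ref{lem:auxiliary_fonction_for_elliptic_equation}; re-deriving \cite[Theorem 2.1]{LM18} is precisely what the paper is structured to avoid.
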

As a consequence, the proof of Theorem \ref{cor:propagationsmallnessschrodinger} is as follows.
\begin{proof}[Proof of Theorem \ref{cor:propagationsmallnessschrodinger}]
From Lemma \ref{lem:auxiliary_fonction_for_elliptic_equation}, there exists $\phi \in W^{1,\infty}(\Omega_0)$ satisfying the elliptic equation \eqref{eq:equationmultiplier} and the lower bound \eqref{eq:lowerbound}. Then from \Cref{lem:reducediv} defining $v=u/\phi$, $v$ satisfies \eqref{eq:equationv}. As a consequence, one can apply Theorem \ref{thm:LM18_propagation_smallness_Solutions} to $v$ and to the Lipschitz diffusion matrix $\hat{A} = \phi^2 A$ that satisfies \eqref{eq:LipschitzAhatOmega} in $\Omega_0$, note that we use \eqref{eq:Omega0}. We deduce the propagation of smallness for $v$ that leads to the propagation of smallness for $u$ i.e. \eqref{eq:PropagationSmallness_Divform_SolutionsV} by using the lower bound \eqref{eq:lowerbound}, the $W^{1,\infty}$-bound \eqref{eq:Winftyboundphi} and again \eqref{eq:Omega0} ensuring that $$\|u\|_{L^{\infty}(\Omega_0)} \leq \|u\|_{L^{\infty}(\hat \Omega)}.$$
This concludes the proof.
\end{proof}

We now present results on propagation of smallness for gradient of solutions to elliptic equations.

\begin{theorem}[{\cite[Theorem 5.1]{LM18}}]\label{thm:LM18_propagation_smallness_Gradients}
There exists $\delta_d \in (0,1)$ depending only on the dimension $d$ such that the following holds. Let $\rho, m >0$, $\delta \in [0, \delta_d]$ and $\mathcal K, E \subset \Omega$, be measurable subsets such that 
\begin{equation}
    \label{eq:distanceandhaussdorffgradient}
    \text{dist}( \mathcal K, \partial \Omega) \geq \rho,\quad \text{dist}(E, \partial \Omega) \geq \rho \quad \text{and} \quad \mathcal C^{d-1-\delta}_{\mathcal H}(E) \geq m.
\end{equation}
There exist $C=C(\Omega,  \Lambda_1, \Lambda_2, \rho, m, \delta)>0$ and $ \alpha=\alpha(\Omega,  \Lambda_1, \Lambda_2, \rho, m, \delta) \in (0,1)$ such that for every weak solution $u \in W^{1,2}(\Omega) \cap L^{\infty}(\Omega)$ of the elliptic equation
\begin{equation}
\label{eq:elliptic_divergence_formgradient}
    - \nabla \cdot (A(x) \nabla u)=0 \ \text{in} \ \Omega,
\end{equation}
we have
\begin{equation}\label{eq:PropagationSmallness_Divform}
    \sup_{\mathcal K} |\nabla u| \leq C \left(\sup_{E} |\nabla u|\right)^{\alpha} \left(\sup_{\Omega} |\nabla u|\right)^{1-\alpha}.
\end{equation}
\end{theorem}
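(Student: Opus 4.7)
The plan is to sketch a proof in the spirit of \cite{LM18}, building on the frequency function and combinatorial machinery that underlies Theorem~\ref{thm:LM18_propagation_smallness_Solutions} but now adapted to $|\nabla u|$ in place of $u$. First I would localize and establish a quantitative three-spheres inequality for the gradient: by applying Carleman estimates for divergence-form operators with Lipschitz coefficients (equivalently, the Garofalo--Lin frequency function) to spatial derivatives of $u$, combined with Caccioppoli-type bounds to pass from energy to $L^\infty$ norms, one obtains for any nested balls $B(x,\rho) \subset B(x,r) \subset B(x,R) \subset \subset \Omega$ and some $\beta \in (0,1)$ the estimate
\begin{equation*}
    \sup_{B(x,r)} |\nabla u| \leq C \left(\sup_{B(x,\rho)} |\nabla u|\right)^\beta \left(\sup_{B(x,R)} |\nabla u|\right)^{1-\beta}.
\end{equation*}

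Next I would set up the analogue of Logunov's doubling-index machinery for the gradient: define $N^\nabla(x,r) = \log(\sup_{B(x,2r)}|\nabla u| / \sup_{B(x,r)}|\nabla u|)$ and exploit the almost-monotonicity properties inherited from the frequency function to control how often, across scales, $|\nabla u|$ can be small. The crucial geometric input here is the Naber--Valtorta theorem from \cite{NV17}: the critical set $\{|\nabla u| = 0\}$ has finite $(d-2)$-Hausdorff measure, with quantitative Minkowski-type bounds depending only on the doubling index. This codimension-$2$ bound is what makes it reasonable to propagate smallness across the critical set while still testing on a set of Hausdorff content strictly less than $d-1$.

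The heart of the argument is then a dyadic cube decomposition of $\Omega$ together with a combinatorial simplex/covering lemma in the style of \cite{LM18}: one classifies dyadic cubes by whether $N^\nabla$ is "large" or "small", bounds the number of large-doubling cubes that can be arranged in a simplex configuration (via the effective Minkowski bound on the critical set), and iterates the three-spheres inequality along chains of cubes connecting a generic point in $\mathcal{K}$ to a portion of $E$. The hypothesis $\mathcal{C}^{d-1-\delta}_{\mathcal{H}}(E) \geq m$ enters at this stage to guarantee that enough cubes with small doubling index actually meet $E$, so that the iteration can be closed with an explicit exponent $\alpha$.

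The hard part will be the combinatorial covering estimate that controls the interaction between the distribution of large-doubling cubes (which cluster near the critical set of $|\nabla u|$) and the Hausdorff content hypothesis on $E$. Although the Naber--Valtorta bound is of dimension $d-2$, the currently available quantitative rectifiability estimates for the "high frequency" parts of $|\nabla u|$ only permit a small loss $\delta_d$ from codimension~$1$, which is exactly why the statement is restricted to $\delta \in [0,\delta_d]$; closing the gap up to $(d-2+\delta)$-content is the open conjecture of \cite{LM18} and would require a strengthening of either the frequency analysis or the simplex lemma beyond what is presently known.
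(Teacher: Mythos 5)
This theorem is not proved in the paper at all: it is quoted verbatim from \cite[Theorem 5.1]{LM18} and explicitly used ``as a black box,'' as the authors say in the remarks following Theorem~\ref{cor:PropagationSmallness_withPotential}. There is therefore no in-paper proof to compare your sketch against; what the paper actually contributes is the reduction from the Schr\"odinger equation to a divergence-form equation via the positive multiplier, after which this imported result is invoked.

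As a stand-alone summary of the Logunov--Malinnikova argument, your sketch has the right silhouette (frequency/doubling-index machinery, three-spheres estimates, dyadic decomposition, simplex-type combinatorial lemma), but one central claim is off. You present the Naber--Valtorta bound on the $(d-2)$-Hausdorff measure of the critical set as ``the crucial geometric input'' driving the covering lemma. This is not how the LM18 proof works: the Naber--Valtorta result is cited in the literature (and in the introduction of the present paper) only as the \emph{heuristic} reason the optimal content should be $d-2+\delta$, i.e.\ as motivation for the open conjecture, not as a proof ingredient. If the quantitative Minkowski-type bound on the critical set were actually usable in the simplex lemma, the theorem would yield $(d-2+\delta)$-content rather than the proved $(d-1-\delta)$-content. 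What LM18 actually do is run a variant of their scalar argument on each first-order derivative $\partial_j u$ (which itself solves a related divergence equation), using only the codimension-one structure coming from the level sets and the doubling-index combinatorics, which is precisely why only a small loss $\delta_d$ below codimension one is obtained. Your final paragraph acknowledges the gap, but the middle of the sketch still reads as if $\mathcal{H}^{d-2}$-finiteness of $\{\nabla u = 0\}$ is an active tool; it is not, and an argument built on that premise would not reproduce the stated bound.

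More importantly for the logic of this paper, you should not attempt to reprove this theorem here at all: the authors' strategy is deliberately designed to avoid re-deriving LM18's deep combinatorics, and the correct ``proof'' in this context is a citation.
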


By now, we aim at studying elliptic equations in non-divergence form. Actually, we are only able to deal with some particular cases, that are \eqref{eq:EllipticEquation_NonDivForm} from Theorem \ref{cor:PropagationSmallness_withPotential}.

\begin{proof}[Proof of Theorem \ref{cor:PropagationSmallness_withPotential}]
Let $\phi = \phi(x)$ be as in Lemma \ref{lem:auxiliary_fonction_for_elliptic_equation}. Then $\hat{v} = u(x,t)/\phi(x)$ is a solution to 
\begin{equation}
	\label{eq:EllipticEquation_NonDivForDivForm}
		\left\{
			\begin{array}{ll}
				  - \nabla_x \cdot ( \phi^2 A(x) \nabla_x \hat{v})- \kappa(x) \partial_{tt} (\phi^2 \hat{v}) =0 & \text{ in }  \Omega_0 \times (-1,1), \\
				\hat{v}(x,0) = 0 & \text{ in } \Omega_0.
			\end{array}
		\right.
\end{equation} 
One can then apply Theorem~\ref{thm:LM18_propagation_smallness_Gradients} to $\hat{\Omega}_0 = \Omega_0 \times (-1,+1)$, $\hat{\mathcal K} = \mathcal K \times \{0\}$ and $\hat{E} = E \times \{0\}$. Note that 
\begin{equation}
    \label{eq:distanceandhaussdorffgradientVProof}
    \text{dist}( \hat{\mathcal K}, \partial \hat{\Omega}_0) \geq \rho/2, \quad \text{dist}(\hat{E}, \partial \hat{\Omega}_0) \geq \rho/2 \quad \text{and} \quad  \mathcal C^{d+1-1-\delta}_{\mathcal H}(\hat{E}) \geq m.
\end{equation}
We then have
$$ \sup_{x \in \mathcal K} |\partial_t \hat{v}(x,0)| \leq C \left(\sup_{x \in E} |\partial_t \hat{v}(x,0)|\right)^{\alpha} \left( \sup_{(x, t) \in \Omega_0 \times (-1,1)} |\nabla_{x,t} \hat{v}(x,t)|\right)^{1-\alpha}.$$
By using \eqref{eq:lowerbound}, the $W^{1, \infty}$-bound on $\phi$, i.e. \eqref{eq:Winftyboundphi} in $\Omega_0$ and also
$$\|\hat u\|_{W^{1,\infty}(\hat \Omega_0 \times (-1,1))},$$
we then deduce \eqref{eq:PropagationSmallness_withPotential}.
\end{proof}

\section{Proof of the spectral estimates}

This section aims at proving \Cref{thm:spectralcompact} and \Cref{thm:spectralRd}.

\subsection{Spectral estimates on compact manifolds}

The goal of this part is to prove Theorem \ref{thm:spectralcompact}. In the first part, we first reduce the obtaining of the spectral estimates for sets of positive Lebesgue measures \eqref{eq:measurablecompact} to the obtaining of spectral estimates for sets of positive Hausdorff measures \eqref{eq:hausdorffcompact}. In the second and third parts, we establish the results of Theorem~\ref{thm:spectralcompact} for manifold $M$ without boundary. Firstly, we prove a local version of \eqref{eq:hausdorffcompact} i.e. replacing the $L^{\infty}$-bound on $M$ in the left hand side of \eqref{eq:hausdorffcompact} to a $L^{\infty}$-bound on a chart of $M$. Secondly, by using the compactness and the connectedness of the manifold $M$, we propagate these local spectral estimates to the whole manifold. In the fourth part, we end the proof of Theorem~\ref{thm:spectralcompact} and prove the case when $\partial M \neq \emptyset$ with Dirichlet or Neumann boundary conditions on $\partial M$. The proof is an application of the double manifold trick introduced in \cite[Section 3]{BM23}.

\subsubsection{Reduction of spectral estimates to sets of positive Hausdorff measures}

In this part, we prove that the spectral estimates for sets of positive Hausdorff measures \eqref{eq:hausdorffcompact} imply the spectral estimates for sets of positive Lebesgue measures  \eqref{eq:measurablecompact}. 

Let $\omega \subset M$ such that $|\omega| > m > 0$. Let us define
$u = \Pi_{\Lambda }u $ with $\|u\|_{L^2(M)} =1$. Let us consider
\begin{equation}
    \label{eq:hatomega}
    \hat{\omega} = \{x \in \omega\ ;\ |u(x)| \leq \frac{1}{2C} e^{-C \sqrt{\Lambda}} \|u\|_{L^{\infty}(M)}\}.
\end{equation}
If $|\hat{\omega}| \geq m/2$, then we have for $\delta = \delta(d) \in (0,1)$, by applying \eqref{eq:equivalencehaussdorff},
$$ \mathcal{C}_{\mathcal{H}}^d(\hat{\omega})> c_d \frac m 2 \Rightarrow \mathcal{C}_\mathcal{H}^{d- \delta } (\hat{\omega}) \geq \min(1, c_d \frac m 2 ).$$
Then, one can apply \eqref{eq:hausdorffcompact} to $\hat{\omega}$ to get by definition of \eqref{eq:hatomega},
\begin{equation*}
\| u\|_{L^\infty(M)} \leq  C e^{C \sqrt{\Lambda}} \sup_{x \in \hat{\omega}} |u (x)| \leq \frac {\| u\|_{L^\infty(M)}} 2.
\end{equation*} 
This is impossible because this leads to $u=0$. Therefore $|\hat{\omega}| < m/2$ and consequently
$$ \int_{{\omega}} |u(x)| dx  \geq \int_{{\omega \setminus \hat{\omega}}} |u(x)| dx  \geq \frac{ m } { (4C) } e^{-C \sqrt{\Lambda}} \| u\|_{L^\infty(M)},
$$ leading to \eqref{eq:measurablecompact}. 

\subsubsection{Local spectral estimates}

In this part, we assume that $M$ is without boundary, $\partial M=\emptyset$. The purpose is to establish local spectral estimates holding in each charts of the manifold $M$. We recall that, in section~\ref{sec:Main_result_spectral_estimates_on_manifold}, we have fixed an atlas $\mathcal A= (\mathcal V_\sigma, \Psi_\sigma)_{\sigma \in \mathcal J}$ containing a finite number of charts with $W^{2, \infty}\cap C^1$-diffeomorphisms $\Psi_\sigma :  \mathcal V_\sigma \longrightarrow \Psi_\sigma(\mathcal V_{\sigma}) \subset \rr^{d-1}\times \rr_+$ such that there exists a family of open sets $(\mathcal U_\sigma)_{\sigma \in \mathcal J}$ satisfying 
\begin{equation}\label{eq:charts_recovering1}
M = \bigcup_{\sigma \in \mathcal J} \mathcal U_\sigma,
\end{equation}
and such that $ \mathcal U_\sigma$ is compactly included in the open set $  \mathcal V_\sigma$ in $M$, for all $\sigma \in \mathcal J$. Moreover, since $\partial M =\emptyset$, $\Psi_\sigma(\mathcal V_{\sigma})$ is an open set of $\rr^d$, for any $\sigma \in \mathcal J$.

The main result of this part is the following one.
\begin{proposition}\label{prop:local_spectral_estimates}
There exists $\delta_d\in (0,1)$ such that for all $\delta \in [0,\delta_d]$ and for every $\sigma \in \mathcal J$ and $m>0$, there exist  $C=C(M, g, V, \sigma,m,\delta)>0$ and $\alpha=\alpha(M,g, V, \sigma,m, \delta)\in (0,1)$ such that for all subsets $\omega$ with $C^{d-\delta}_{\mathcal H}(\omega \cap \mathcal U_\sigma)>m$ and $\Lambda > 0$,
\begin{equation}\label{eq:local_spectral_estimates}
\|\Pi_{\Lambda} u\|_{L^{\infty}( \mathcal U_\sigma)} \leq C  e^{C \sqrt {\Lambda}} \left( \sup_{\omega \cap \mathcal U_\sigma} |\Pi_{\Lambda} u|\right)^{\alpha} \|\Pi_{\Lambda} u\|^{1-\alpha}_{L^{\infty}(M)}\qquad \forall u \in L^2(M).
\end{equation}
\end{proposition}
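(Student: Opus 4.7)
The plan is to apply the gradient propagation of smallness \Cref{cor:PropagationSmallness_withPotential} to a standard Lebeau--Robbiano elliptic lift of $\Pi_{\Lambda}u$, read in the chart $(\mathcal V_\sigma,\Psi_\sigma)$. Fix $C_0 > \|V\|_{\infty}$, so that $\lambda_k + C_0 > 0$ for every $k\geq 1$, write $u = \Pi_{\Lambda}u = \sum_{\lambda_k\leq \Lambda} c_k \phi_k$, and introduce the lift
\begin{equation*}
F(x,t) = \sum_{\lambda_k\leq\Lambda} c_k\,\phi_k(x)\,\frac{\sinh\bigl(t\sqrt{\lambda_k + C_0}\bigr)}{\sqrt{\lambda_k + C_0}},\qquad (x,t)\in M\times\R.
\end{equation*}
Using $-\Delta_g\phi_k + V\phi_k = \lambda_k\phi_k$ one checks immediately that $F$ satisfies
\begin{equation*}
-\Delta_g F - \partial_{tt} F + (V + C_0) F = 0 \ \text{in}\ M\times\R,\qquad F(x,0) = 0,\qquad \partial_t F(x,0) = u(x).
\end{equation*}

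Next, I would read $F$ in the chart $\Psi_\sigma$. Setting $\tilde F(x,t) = F(\Psi_\sigma^{-1}(x),t)$ on $\Psi_\sigma(\mathcal V_\sigma)\times(-1,1)$ and using the local formula $\Delta_g = \kappa^{-1}\partial_i\bigl(\kappa\, g^{\sigma,ij}\partial_j \cdot\bigr)$ with $\kappa = \sqrt{\det g^\sigma}$, the equation becomes the divergence-form system
\begin{equation*}
-\divergence_x\bigl(\kappa(x)(g^\sigma)^{-1}(x)\nabla_x\tilde F\bigr) - \kappa(x)\partial_{tt}\tilde F + \kappa(x)(V + C_0)\tilde F = 0
\end{equation*}
on $\Psi_\sigma(\mathcal V_\sigma)\times(-1,1)$, together with $\tilde F(x,0) = 0$. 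The matrix $\kappa(g^\sigma)^{-1}$ is symmetric, uniformly elliptic and Lipschitz thanks to \eqref{eq:EllipticM}--\eqref{eq:LipschitzM}, $\kappa$ is a positive Lipschitz density, and $\kappa(V+C_0)$ is bounded, so this is exactly the framework of \Cref{cor:PropagationSmallness_withPotential}. Since the diffeomorphism $\Psi_\sigma$ is bi-Lipschitz and $\mathcal U_\sigma\subset\subset \mathcal V_\sigma$, the sets $\hat K = \Psi_\sigma(\mathcal U_\sigma)$ and $\hat E = \Psi_\sigma(\omega\cap\mathcal U_\sigma)$ have positive distance $\rho > 0$ to $\partial\Psi_\sigma(\mathcal V_\sigma)$, and the bi-Lipschitz invariance of the Hausdorff content transfers the hypothesis $\mathcal C^{d-\delta}_{\mathcal H}(\omega\cap\mathcal U_\sigma) \geq m$ into $\mathcal C^{d-\delta}_{\mathcal H}(\hat E) \geq m'$ for some $m' > 0$ depending on $\sigma$.

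Applying \Cref{cor:PropagationSmallness_withPotential} to $\tilde F$ yields
\begin{equation*}
\sup_{x\in\hat K}|\partial_t\tilde F(x,0)| \leq C\Bigl(\sup_{x\in\hat E}|\partial_t\tilde F(x,0)|\Bigr)^{\alpha}\|\tilde F\|^{1-\alpha}_{W^{1,\infty}(\Psi_\sigma(\mathcal V_\sigma)\times(-1,1))}.
\end{equation*}
The identity $\partial_t F(x,0) = u(x)$ together with the bi-Lipschitz character of $\Psi_\sigma$ transforms the two supremum factors into $\|\Pi_\Lambda u\|_{L^{\infty}(\mathcal U_\sigma)}$ and $\sup_{\omega\cap\mathcal U_\sigma}|\Pi_\Lambda u|$ respectively. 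The proof is then closed by the a priori bound
\begin{equation*}
\|F\|_{W^{1,\infty}(M\times(-1,1))} \leq C e^{C\sqrt{\Lambda}}\|\Pi_\Lambda u\|_{L^{\infty}(M)},
\end{equation*}
which I would obtain by noting that $F(\cdot,t) = \sinh(t\sqrt{H_{g,V}+C_0})/\sqrt{H_{g,V}+C_0}\cdot u$ and $\partial_tF(\cdot,t) = \cosh(t\sqrt{H_{g,V}+C_0})u$: by spectral truncation these provide an $L^2$ bound of size $e^{C\sqrt{\Lambda}}\|u\|_{L^2(M)}\leq C e^{C\sqrt{\Lambda}}\|u\|_{L^\infty(M)}$, which is then upgraded to $W^{1,\infty}$ by interior elliptic regularity (Moser iteration plus Caccioppoli for the gradient) applied to $F$ on an enlarged slab, the extra cost being polynomial in $\sqrt{\Lambda}$ and therefore absorbed in $e^{C\sqrt{\Lambda}}$. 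Inserting this bound into the propagation-of-smallness inequality yields exactly \eqref{eq:local_spectral_estimates}. The main obstacle is precisely this $L^2$-to-$W^{1,\infty}$ upgrade of the lift on a compact manifold with Lipschitz metric, with explicit dependence in $\sqrt{\Lambda}$; the remaining ingredients are algebraic and follow directly from \Cref{cor:PropagationSmallness_withPotential}.
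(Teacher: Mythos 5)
Your proposal is correct and follows essentially the same route as the paper: add the ghost variable, form the $\sinh$-lift of $\Pi_\Lambda u$, push it forward to a chart so that \Cref{cor:PropagationSmallness_withPotential} applies with $\hat K = \Psi_\sigma(\mathcal U_\sigma)$, $\hat E = \Psi_\sigma(\omega\cap\mathcal U_\sigma)$, then close the argument by combining interior elliptic regularity (to pass from $L^2$ to $W^{1,\infty}$ for the lift, with a $\Lambda$-independent constant) and the spectral truncation giving the $e^{C\sqrt\Lambda}$ factor. The only cosmetic differences are that you shift the eigenvalues by $C_0$ inside the $\sinh$ rather than first replacing $H_{g,V}$ by $H_{g,V}+\|V\|_\infty$, and that you make explicit the bi-Lipschitz invariance of the $(d-\delta)$-Hausdorff content; note also that the $L^2$-to-$W^{1,\infty}$ bootstrap is $\Lambda$-uniform because the elliptic equation satisfied by the lift has $\Lambda$-independent coefficients, so there is in fact no "polynomial-in-$\sqrt\Lambda$ cost" to absorb.
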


\begin{proof}
First, one can assume that $V \geq 0$ just by considering the elliptic operator 
\begin{equation}
    \label{eq:HAvTranslated}
    H_{g,V} + \|V\|_{\infty} u\qquad \forall u \in \mathrm{Dom}(H_{g,V}),
\end{equation}
that has the same eigenfunctions $(\phi_k)_{k \geq 1}$ as the elliptic operator $H_{g,V}$ corresponding to the shifted eigenvalues $\lambda_k +  \|V\|_{\infty}$.

We fix $\sigma \in \mathcal J$, we now work in a coordinate patch $ \mathcal U_\sigma \subset \subset  \mathcal V_\sigma$ and we define the sets
\begin{equation}
    \mathcal V = \Psi_\sigma( \mathcal V_\sigma),\ \mathcal U = \Psi_\sigma( \mathcal U_\sigma)\ \text{and}\ E = \Psi_\sigma(\omega \cap  \mathcal U_\sigma).
\end{equation}

For $\Lambda >0$, we then consider
\begin{equation}
    u(x) =  \sum_{\lambda_k \leq \Lambda} u_k \phi_k  (x)\qquad x \in  M,
\end{equation}
and its local push forward version $U$
\begin{equation}
    U(x) = u \circ \Psi_\sigma^{-1}(x) = \sum_{\lambda_k \leq \Lambda} u_k (\phi_k \circ \Psi_\sigma^{-1}) (x) = \sum_{\lambda_k \leq \Lambda} u_k \Phi_k (x)\qquad x \in  \mathcal V.
\end{equation}
We then add an extra-variable to $u$ by defining
\begin{equation}
    \hat{u}(x,t) =  \sum_{\lambda_k \leq \Lambda} u_k \frac{\sinh(\sqrt{\lambda_k }t )}{\sqrt{\lambda_k}} \phi_k(x)\qquad (x,t) \in  M \times (-2,+2),
\end{equation}
and its local push forward version $\hat{U}$
\begin{equation}
    \hat{U}(x,t) = \sum_{\lambda_k \leq \Lambda} u_k \frac{\sinh(\sqrt{\lambda_k }t)}{\sqrt{\lambda_k}} \Phi_k(x)\qquad (x,t) \in  \mathcal V \times (-2,+2).
\end{equation}

In the chart $(\mathcal V_{\sigma}, \Psi_{\sigma})$, let us consider $(g_{i,j})_{1 \leq i,j \leq d}$ the local coordinates of the metric $g$. We define for $x \in \mathcal V$, $G(x)= (g_{i,j}(\Psi_{\sigma}^{-1}(x)))_{1 \leq i,j \leq d}$. We observe that $\hat{U}$ solves 
\begin{equation}
	\label{eq:EllipticEquation_NonDivFormProof}
		\left\{
			\begin{array}{ll}
				  - \nabla_x \cdot (A(x) \nabla_x \hat{U}) - \kappa(x) \partial_{tt} \hat{U}+ \hat{V}(x) \hat{U}=0 & \text{ in }  \mathcal V \times (-2,2), \\
				\hat{U}(x,0) = 0 & \text{ in } \mathcal V,
			\end{array}
		\right.
\end{equation}
with $A=G(x)^{-1} \sqrt{\det G(x)}$, $\kappa(x)=\sqrt{\det G(x)}$ and $\hat{V}(x)=\sqrt{\det G(x)} V\left(\Psi_{\sigma}^{-1}(x)\right)$ satisfying the hypotheses \eqref{eq:LipschitzAOmega}, \eqref{eq:hypothesisVOmega} and \eqref{eq:LipschitzkappaOmega}. We can then apply Corollary \ref{cor:PropagationSmallness_withPotential} to $\hat{U}$ with $\Omega = \tilde{\mathcal V}$, $\mathcal K = \mathcal U$ and $E = \Psi_\sigma(\omega \cap  \mathcal U_\sigma)$ such that $\mathcal U \subset \subset \tilde{\mathcal V} \subset \subset \mathcal V$ to get
\begin{equation}
\label{eq:PropagationSmallness_withPotentialProof}
    \sup_{x \in \mathcal U} |\partial_t \hat{U}(x,0)| \leq C \left(\sup_{x \in E} |\partial_t \hat{U}(x,0)|\right)^\alpha \left\|\hat U\right\|^{1-\alpha}_{W_{t,x}^{1, \infty}(\Omega\times(-1,1))}.
\end{equation}
The left hand side of \eqref{eq:PropagationSmallness_withPotentialProof} exactly gives
\begin{equation}
\label{eq:lefthandside}
     \sup_{x \in \mathcal U} |\partial_t \hat{U}(x,0)| =  \sup_{x \in \mathcal U} |U(x)| = \sup_{x \in \mathcal U_\sigma} |u(x)|.
\end{equation}
The first right hand side term of \eqref{eq:PropagationSmallness_withPotentialProof} exactly gives
\begin{equation}
\label{eq:righthandside1}
    \sup_{x \in E} |\partial_t \hat{U}(x,0)| = \sup_{x \in E} |U(x)|  = \sup_{x \in \omega \cap \mathcal U_\sigma} |u(x)|.
\end{equation}
Moreover, by elliptic regularity, see \cite[Theorem 9.11]{GT01}, Sobolev embeddings and a bootstrap argument, using in particular the elliptic equation \eqref{eq:EllipticEquation_NonDivFormProof}, we obtain that the second right hand side term of \eqref{eq:PropagationSmallness_withPotentialProof} is bounded as follows
\begin{equation}
\label{eq:righthandside2}
   \|\hat U\|_{W_{t,x}^{1, \infty}(\Omega\times(-1,1))} \leq C \|\hat{U}\|_{L^2(\mathcal{V}\times(-2,+2))}.
\end{equation}
By using a change of variable, we then obtain that
\begin{equation}
\label{eq:righthandside21}
    \|\hat{U}\|_{L^2(\mathcal{V}\times(-2,+2))} \leq C     \|\hat{u}\|_{L^2(\mathcal{V}_\sigma\times(-2,+2))} \leq \|\hat{u}\|_{L^2(M\times(-2,+2))}.
\end{equation}
Now, by using the orthogonality of the eigenfunctions $(\phi_k)_{k \geq 1}$ in $L^2(M)$, we then obtain that
\begin{equation}
\label{eq:righthandside22}
    \|\hat{u}\|_{L^2(M\times(-2,+2))} \leq C \exp(C \sqrt{\Lambda}) \|u\|_{L^2(M)}.
\end{equation}
We now gather \eqref{eq:PropagationSmallness_withPotentialProof}, \eqref{eq:lefthandside}, \eqref{eq:righthandside1}, \eqref{eq:righthandside2}, \eqref{eq:righthandside21}, \eqref{eq:righthandside22} to get 
\eqref{eq:local_spectral_estimates}.
\end{proof}

\subsubsection{Propagation to the whole manifold}

In this part, we prove Theorem \ref{thm:spectralcompact} by using the connectedness of the manifold $M$ to propagate the estimates \eqref{eq:local_spectral_estimates} to the whole manifold $M$, which is still assumed to be without boundary.

\begin{proof}[Proof of Theorem \ref{thm:spectralcompact} in the case $\partial M=\emptyset$]
We define the following subset $\mathcal I \subset \mathcal J$ such that $\sigma \in \mathcal I$ if and only if there exist $C_\sigma>0$ and $\alpha_\sigma \in (0,1)$ so that 
\begin{equation}
 \|\Pi_{\Lambda} u\|_{L^{\infty}(\mathcal U_\sigma)} \leq C_\sigma e^{C_\sigma \sqrt{\Lambda}} (\sup_{x \in \omega} |\Pi_{\Lambda}u(x)|)^{\alpha_\sigma} \|\Pi_{\Lambda} u\|^{1-\alpha_\sigma}_{L^{\infty}(M)} \quad \forall u \in L^2(M),\ \forall \Lambda > 0.
\end{equation}

Thanks to \eqref{eq:charts_recovering1}, we have $$M =\bigcup_{\sigma \in \mathcal I} \mathcal U_\sigma \cup \bigcup_{\sigma \notin \mathcal I} \mathcal U_\sigma.$$
First of all, $\mathcal I$ is not empty. Indeed, since $C^{d-\delta'}_{\mathcal H}(\omega)>m$, there exists $\sigma_0 \in \mathcal J$ such that $C^{d-\delta'}_{\mathcal H}(\omega \cap \mathcal U_{\sigma_0})>\frac mN$, where $N$ denotes the cardinality of the finite set $\mathcal J$. It is then sufficient to apply Proposition~\ref{prop:local_spectral_estimates} to obtain $\sigma_0 \in \mathcal I$.

Let us assume by contradiction that $\mathcal I \neq \mathcal J$. Since $M$ is connected, there exist $\sigma \in \mathcal I$ and $\tilde \sigma \notin \mathcal I$ such that $\mathcal U_\sigma \cap  \mathcal U_{\tilde \sigma} \neq \emptyset$. By applying Proposition~\ref{prop:local_spectral_estimates} with $j=\tilde{\sigma}$ and $\omega =\mathcal U_{\sigma} \cap \mathcal U_{\tilde \sigma}$ that is open, there exist $C_{\sigma,\tilde \sigma}>0$ and $0< \alpha_{\sigma,\tilde \sigma}<1$ such that for $\Lambda >0$ and $u \in L^2(M)$,
\begin{align}
\|\Pi_{\Lambda} u\|_{L^{\infty}(\mathcal U_{\tilde \sigma})} & \leq C_{\sigma, \tilde{\sigma}} e^{C_{\sigma,\tilde \sigma} \sqrt{\Lambda}}  \|\Pi_{\Lambda}u\|_{L^{\infty}(\mathcal U_{\sigma} \cap \mathcal U_{\tilde{\sigma}})}^{\alpha_{\sigma,\tilde \sigma}} \|\Pi_{\Lambda} u\|^{1-\alpha_{\sigma,\tilde \sigma}}_{L^{\infty}(M)},\notag
\end{align}
so that
\begin{align}
\|\Pi_{\Lambda} u\|_{L^{\infty}(\mathcal U_{\tilde \sigma})} & \leq C_{\sigma,\tilde \sigma} e^{C_{\sigma,\tilde \sigma} \sqrt{\Lambda}} \|\Pi_{\Lambda}u\|_{L^{\infty}(\mathcal U_{\sigma} )}^{\alpha_{\sigma,\tilde \sigma}} \|\Pi_{\Lambda} u\|^{1-\alpha_{\sigma,\tilde \sigma}}_{L^{\infty}(M)}.\label{eq:local_estimate_for_k}
\end{align}

Moreover, since $\sigma \in \mathcal I$, there exist $C'_{\sigma}>0$ and $0<\alpha'_{\sigma}<1$ such that
\begin{equation}\label{eq:local_estimate_for_i}
\|\Pi_{\Lambda} u\|_{L^{\infty}(\mathcal U_\sigma)}  \leq C'_{\sigma}  e^{C'_\sigma \sqrt{\Lambda}}( \sup_{x \in \omega} |\Pi_{\Lambda}u(x)|)^{\alpha'_\sigma} \|\Pi_{\Lambda} u\|^{1-\alpha'_\sigma}_{L^{\infty}(M)}\quad \forall u \in L^2(M),\ \forall \Lambda > 0.
\end{equation}

Let $\Lambda>0$ and $u \in L^2(M)$ such that $\|\Pi_{\Lambda}u\|_{L^{\infty}(M)}=1$. 
We therefore deduce from \eqref{eq:local_estimate_for_k} and \eqref{eq:local_estimate_for_i} that

\begin{equation*}
\|\Pi_{\Lambda} u\|_{L^{\infty}(\mathcal U_{\tilde \sigma})}  \leq C''_{\sigma, \tilde \sigma} e^{C''_{\sigma, \tilde \sigma} \sqrt{\Lambda}}( \sup_{x \in \omega} |\Pi_{\Lambda}u(x)|)^{\beta_{\sigma, \tilde \sigma}}
\end{equation*}
with $0< \beta_{\sigma,\tilde \sigma}=\alpha'_\sigma \alpha_{\sigma,\tilde \sigma}<1$ and $C''_{\sigma,\tilde \sigma}= \max({C'}^{\alpha_{\sigma,\tilde \sigma}}_{\sigma} C_{\sigma,\tilde \sigma},\alpha_{\sigma,\tilde \sigma}C'_\sigma+C_{\sigma,\tilde \sigma})$. It readily follows that for all $\Lambda>0$ and for all $u \in L^2(M)$,

\begin{equation*}
\|\Pi_{\Lambda} u\|_{L^{\infty}(U_k)}  \leq C''_{\sigma,\tilde \sigma} e^{C''_{\sigma,\tilde \sigma} \sqrt{\Lambda}} (\sup_{x \in \omega} |\Pi_{\Lambda}u(x)|)^{\beta_{\sigma,\tilde \sigma}} \|\Pi_{\Lambda}u \|^{1-\beta_{\sigma,\tilde \sigma}}_{L^{\infty}(M)}.
\end{equation*}

Thus, $\tilde \sigma \in \mathcal I$ and this provides a contradiction. 

To conclude this subsection, we have $\mathcal I=\mathcal J$ and by defining
$$0<\alpha = \min_{\sigma \in \mathcal J}\alpha_\sigma<1 \quad \text{and} \quad C=\max_{\sigma \in \mathcal J}C_\sigma >0,$$
we have 
\begin{equation*}
 \|\Pi_{\Lambda}u\|_{L^{\infty}(M)} \leq C e^{C \sqrt{\Lambda}} (\sup_{x \in \omega} |\Pi_{\Lambda}u(x)|)^{\alpha} \|\Pi_{\Lambda}u\|^{1-\alpha}_{L^{\infty}(M)}\quad \forall u \in L^2(M),\ \forall \Lambda > 0,
\end{equation*}
which readily provides 
\begin{equation*}
 \|\Pi_{\Lambda}u\|_{L^{\infty}(M)} \leq C^{\frac{1}{\alpha}} e^{\frac C{\alpha} \sqrt{\Lambda}} \sup_{x \in \omega} |\Pi_{\Lambda}u(x)|\quad \forall u \in L^2(M),\ \forall \Lambda > 0.
\end{equation*}
This concludes the proof.
\end{proof}

\subsubsection{The double manifold}

In this part, we prove Theorem \ref{thm:spectralcompact} for a manifold with boundary $M$ and Dirichlet or Neumann boundary conditions on $\partial M$. The idea consists in reducing this question to the case of a manifold without boundary by gluing two copies of $M$ along the boundary in such a way the new double manifold $\widetilde{M}$ inherits a Lipschitz metric, which allows to apply the previous results (without boundary) to this double manifold. This is done in \cite[Section 3]{BM23} but the only point that we need to check in our setting is the equation satisfied by the eigenfunctions on the double manifold.\medskip

Let $\widetilde{M} = \overline{M} \times \{-1,1\}/\partial M$  the double space made of two copies of $\overline{M}$ where we identified the points on the boundary, $(x,-1)$ and $(x,1)$, $x\in \partial M$. 
\begin{theorem}[The double manifold]
\label{thm:double}
There exist a $C^{ \infty}$ structure on the double manifold $\widetilde{M}$, a metric $\widetilde{g} \in W^{1, \infty}$ on $\widetilde{M}$, a potential $\widetilde{V} \in L^{\infty}(\widetilde{M})$ such that the following holds.
\begin{itemize}
\item The maps
$$ i^\pm :  x\in M  \rightarrow (x, \pm 1) \in \widetilde{M} = M \times \{\pm 1\} / \partial M$$ are isometric embeddings.
\item The potential $\widetilde{V}$ is such that
$$ \widetilde{V}(x,\pm 1) = V(x)\qquad x \in M.$$
\item For any eigenfunction $\phi_{\lambda}$ with eigenvalue $\lambda$ of the operator $H_{g,V}$ with Dirichlet or Neumann boundary conditions, there exists an eigenfunction $\widetilde{\phi_{\lambda}}$ with the same eigenvalue $\lambda$ of the operator $H_{\tilde{g}, \widetilde{V}}$ on $\widetilde{M}$ such that 
\begin{equation}
\label{eq:ext}
\widetilde{\phi_{\lambda}} \mid_{M \times \{1\}} = \phi_{\lambda}, \quad \widetilde{\phi_{\lambda}} \mid_{M \times \{-1\}} = \begin{cases}  -\phi_{\lambda} \quad &(\text{Dirichlet boundary conditions}),\\
 \phi_{\lambda} \qquad &(\text{Neumann boundary conditions}). \end{cases}
\end{equation}
\end{itemize}
\end{theorem}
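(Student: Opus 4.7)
The smooth structure on $\widetilde M$ and the Lipschitz extension of $g$ are carried out in \cite[Section 3]{BM23}, so I only sketch them and then focus on the potential and the eigenfunction equation, which is the genuinely new point. Choose a tubular neighborhood of $\partial M$ with adapted coordinates $(y,t) \in \partial M \times [0,\varepsilon)$ in which $t = \mathrm{dist}_g(\cdot, \partial M)$ and $g$ takes the boundary-normal form $g = dt^2 + h(t,y)$, where $t \mapsto h(t,\cdot)$ is a Lipschitz family of Riemannian metrics on $\partial M$. On $\widetilde M = \overline M \times \{-1,1\}/\partial M$, declare the chart $(y,t) \in \partial M \times (-\varepsilon,\varepsilon)$ smooth, with $t>0$ parametrising $i^+(M)$ and $t<0$ parametrising $i^-(M)$, and extend by even reflection:
\begin{equation*}
\widetilde g(y,t) = dt^2 + h(|t|,y), \qquad \widetilde V(y,t) = V(y,|t|).
\end{equation*}
Outside the collar, take $\widetilde g$ and $\widetilde V$ to be the two disjoint copies of $g$ and $V$. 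Since $t \mapsto h(t,\cdot)$ is Lipschitz on $[0,\varepsilon)$, its even reflection is Lipschitz on $(-\varepsilon,\varepsilon)$, so $\widetilde g \in W^{1,\infty}(\widetilde M)$; likewise $\widetilde V \in L^\infty(\widetilde M)$ with $\|\widetilde V\|_\infty = \|V\|_\infty$. Items (i) and (ii) of the theorem hold by construction.

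\textbf{Extension of eigenfunctions.} Given an eigenfunction $\phi_\lambda$ of $H_{g,V}$ with eigenvalue $\lambda$ satisfying the prescribed boundary condition, set
\begin{equation*}
\widetilde{\phi_\lambda}(y,t) = \begin{cases} \phi_\lambda(y,t) & \text{if } t \geq 0, \\ \sigma\, \phi_\lambda(y,-t) & \text{if } t \leq 0, \end{cases}
\end{equation*}
in the collar (and analogously on the two copies elsewhere), where $\sigma = +1$ in the Neumann case and $\sigma = -1$ in the Dirichlet case. This is precisely \eqref{eq:ext}. In the Neumann case the even reflection of an $H^1$-function is always $H^1$; in the Dirichlet case the odd reflection lies in $H^1(\widetilde M)$ precisely because $\phi_\lambda \in H^1_0(M)$ vanishes on $\partial M$ in the trace sense.

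\textbf{The weak eigenvalue equation and the main obstacle.} It remains to verify that for every $\psi \in C^\infty_c(\widetilde M)$,
\begin{equation*}
\int_{\widetilde M} \bigl(\langle \nabla_{\widetilde g} \widetilde{\phi_\lambda}, \nabla_{\widetilde g}\psi \rangle_{\widetilde g} + \widetilde V\, \widetilde{\phi_\lambda}\, \psi\bigr)\, d\mathrm{vol}_{\widetilde g} = \lambda \int_{\widetilde M} \widetilde{\phi_\lambda}\, \psi\, d\mathrm{vol}_{\widetilde g}.
\end{equation*}
I split the integral over $\widetilde M = i^+(M) \cup i^-(M)$ and apply Green's formula on each half. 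On $i^+(M)$, since $H_{g,V}\phi_\lambda = \lambda \phi_\lambda$, I obtain $\lambda \int_{i^+(M)} \phi_\lambda \psi - \int_{\partial M}(\partial_t \phi_\lambda|_{t=0})\,\psi\, dS$. On $i^-(M)$, the change of variables $s = -t$ together with the fact that $\widetilde g$ has the same boundary-normal form in $(y,s)$ as $g$ yields $\lambda \int_{i^-(M)} \widetilde{\phi_\lambda}\, \psi - \sigma \int_{\partial M}(\partial_t \phi_\lambda|_{t=0})\,\psi\, dS$. Summing, the boundary contributions combine into $-(1+\sigma)\int_{\partial M}(\partial_t\phi_\lambda|_{t=0})\,\psi\, dS$, which vanishes either because $\sigma = -1$ (Dirichlet) or because $\partial_t \phi_\lambda|_{t=0} = 0$ (Neumann); the bulk terms then give the claimed identity. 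The main technical obstacle is justifying these integrations by parts at the gluing seam with a metric that is only $W^{1,\infty}$, but since $\widetilde{\phi_\lambda} \in H^1(\widetilde M)$ and $\psi$ is smooth and compactly supported, everything stays within the standard weak framework; the essential point is precisely that the reflection parity be matched with the boundary condition, which is what guarantees the cancellation of the seam terms.
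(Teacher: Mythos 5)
Your proposal is correct and follows exactly the strategy the paper attributes to \cite[Theorem 7]{BM23}: pass to boundary-normal coordinates $g = dt^2 + h(t,y)$, extend $g$ and $V$ by even reflection across the seam (which keeps $g$ Lipschitz and $V$ bounded), reflect the eigenfunction with parity $\sigma = \mp 1$ matched to the Dirichlet/Neumann boundary condition, and check that the weak eigenvalue equation holds on $\widetilde M$ by verifying that the boundary contributions $-(1+\sigma)\int_{\partial M}(\partial_t\phi_\lambda|_{t=0})\,\psi\,dS$ vanish; this is the same computation the paper refers to as the ``jump formula,'' phrased as a distributional identity for $\Delta_{\widetilde g}\widetilde{\phi_\lambda}$ rather than as an integration by parts against test functions. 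The paper in fact omits the proof entirely, citing \cite{BM23} and observing that the $L^\infty$ potential introduces no new difficulty — your sketch confirms this, since $\widetilde V$ enters only as a zeroth-order multiplier and contributes no seam terms.
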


The proof exactly follows the same lines as the one of \cite[Theorem 7]{BM23}. The main difference comes from the fact that we need to deal with a potential $V \in L^{\infty}$. One of the main difficulty in the proof of \cite[Theorem 7]{BM23} consists in computing the Laplacian of $\tilde{\phi}_{\lambda}$ on the new manifold $\tilde{M}$, thanks to the jump formula. In particular, there is no new difficulty to add this potential in the proof of \cite[Theorem 7]{BM23}. For the sake of conciseness, we omit the proof of Theorem~\ref{thm:double}.

The results of Theorem~\ref{thm:spectralcompact} are readily implied by Theorem~\ref{thm:double}.

\subsection{Spectral estimates on the Euclidean space}

The goal of this part is to prove Theorem \ref{thm:spectralRd}. In the first part, we first show that \eqref{eq:measurableRd} can be deduced from \eqref{eq:hausdorffRd} in the case when $\delta=0$. In the second part, we prove the spectral estimates \eqref{eq:hausdorffRd}.

\subsubsection{Reduction of spectral estimates to uniformly distributed sets}

In this first part, we explain how \eqref{eq:measurableRd} can be deduced from \eqref{eq:hausdorffRd}. Let $g \in L^2(\rr^d)$ and $\Lambda > 0$ such that $g=\Pi_{\Lambda} g$. If $\omega \subset \rr^d$ is a thick subset, i.e. satisfying \eqref{eq:measurethick} we define an auxiliary subset $\tilde{\omega}=\bigcup_{k \in \mathbb Z^d} \tilde{\omega}_k$, where for all $k \in \mathbb Z^d$, 
$$\tilde{\omega}_k = \left\{ x \in \omega \cap B(k,R), \quad |g(x)|^2 \leq \frac{2}{|\omega \cap B(k,R)|} \int_{\omega \cap B(k,R)} |g(y)|^2 dy \right\} \subset B(k,R).$$
By definition, we have for all $k \in \mathbb Z^d$,
$$\int_{\omega \cap B(k,R)} |g(x)|^2 dx \geq \int_{(\omega\cap B(k,R) )\setminus \tilde{\omega}_k} |g(x)|^2 dx \geq \frac{2 |(\omega\cap B(k,R) )\setminus \tilde{\omega}_k|}{|\omega \cap B(k,R)|} \int_{\omega \cap B(k,R)} |g(y)|^2dy.$$
Thus, if $$\int_{\omega \cap B(k,R)} |g(y)|^2dy > 0,$$
then $$|(\omega \cap B(k,R))\setminus \tilde{\omega}_k| \leq \frac{|\omega\cap B(k,R)|}2,$$
which implies $$|\tilde{\omega}_k| \geq \frac{\gamma}2 |B(k,R)|,$$
thanks to the thickness property satisfied by $\omega$ i.e. \eqref{eq:measurethick}. Otherwise, if $$\int_{\omega \cap B(k,R)} |g(y)|^2d y = 0,$$
then $g \equiv 0$ in $\omega \cap B(k,R)$ so $\tilde{\omega}_k =\omega \cap B(k,R)$, therefore 
$$|\tilde{\omega}_k|=|\omega \cap B(k,R)| \geq \gamma |B(k,R)| >\frac{\gamma}2 |B(k,R)|.$$
Finally, $\tilde{\omega}$ is still a thick subset of $\rr^d$, and it follows from the spectral estimate \eqref{eq:hausdorffRd} that 

\begin{align*}
\left\| g \right\|^2_{L^2(\rr^d)}  & \leq C e^{C \sqrt{\Lambda}}\sum_{k \in \mathbb Z^d} \sup_{x \in \tilde{\omega}_k}| g(x)|^2 \\
& \leq C e^{C \sqrt \Lambda} \sum_{k \in \mathbb Z^d} \frac{2}{|\omega \cap B(k,R)|} \int_{\omega \cap B(k,R)} |g(x)|^2dx\\
& \leq \frac 2{\gamma |B(0,R)|} C e^{C \sqrt \Lambda} \sum_{k \in \mathbb Z^d}  \int_{\omega \cap B(k,R)} |g(x)|^2dx\\
& \leq C e^{C \sqrt \Lambda} \int_{\omega} |g(x)|^2dx,
\end{align*}
since $$1 \leq \sum_{k \in \mathbb Z^d} \un_{B(k,R)} \leq C(d).$$
This concludes the proof of \eqref{eq:measurableRd}.

\subsubsection{Spectral estimates}

In this part, we prove the spectral estimates \eqref{eq:hausdorffRd}. We can assume $V\geq 0$ since spectral estimates for $H_{g, V, \kappa}+ \|V\|_{L^{\infty}}$ readily imply spectral estimates for $H_{g, V, \kappa}$. Let $m, R >0$ and $\omega \subset \rr^d$ satisfying 
$$\forall x \in \rr^d, \quad C^{d-\delta}_{\mathcal H}(\omega \cap B(x,R)) \geq m,$$
for $\delta \in [0,\delta_d]$ with $0<\delta_d<1$ provided by Theorem~\ref{cor:PropagationSmallness_withPotential}.

Let us fix $\lambda > 0$ and $f = \Pi_{\lambda} f$. The strategy, inspired by the works \cite{BM21,BM23}, consists in adding a ghost dimension and defining the following $(d+1)$-dimensional function
\begin{equation}\label{eq:ghost_dimension_function}
    F_{\lambda}(x,y)= \frac{\sinh\left(\sqrt{H_{g, V, \kappa}} y \right)}{\sqrt{H_{g, V, \kappa}}} \Pi_{\lambda} f(x), \quad (x,y) \in \rr^d \times (-1,1).
\end{equation}

Notice that $F_{\lambda} \in H^2(\rr^d \times (-5R,5R))$ and satisfies the following elliptic equation
\begin{equation}
    \label{eq:equationFLambda}
    - \nabla_x \cdot(\kappa(x) g^{-1}(x) \nabla_x F_{\lambda}) -\kappa(x) \partial_y^2 F_{\lambda} + \kappa(x) V(x)F_{\lambda}=0 \quad \text{in} \quad \rr^d\times (-5R,5R).
\end{equation}
Moreover, we have 
$$F_{\lambda}(\cdot, 0)=0 \quad \text{and} \quad \partial_y F(\cdot, 0) = \Pi_{\lambda}f \quad \text{on} \quad \rr^d.$$

In the following, the constants will be of the form $C=C(M, \Lambda_1, \Lambda_2, \|V\|_{\infty}, R, m, \delta)>0$ and can change from one line to another.

Since the conclusion of Corollary~\ref{cor:PropagationSmallness_withPotential} is invariant by translations and because of the uniform bounds on $g$ and $V$ and $\kappa$, we have that there exist positive constants $C>0$ and $0<\alpha<1$ such that for all $k \in \mathbb Z^d$,
\begin{equation*}
    \sup_{x \in B(k,R)} |\partial_y F_{\lambda}(x,0)| \leq C (\sup_{x \in \omega\cap B(k,R)} |\partial_y F_{\lambda}(x,0)|)^\alpha \|F_{\lambda}(x,y)\|_{W_{x,y}^{1, \infty}( B(k,2R)\times (-R,R))}^{1-\alpha},
\end{equation*}
which implies 
\begin{equation}\label{eq:Propagation_Smallness_FrequencyCutOff}
    \sup_{x \in B(k,R)} |\Pi_{\lambda}f(x)| \leq C (\sup_{x \in \omega\cap B(k,R)} |\Pi_{\lambda}f(x)|)^\alpha \|F_{\lambda}(x,y)\|_{W_{x,y}^{1, \infty}( B(k,2R)\times (-R,R))}^{1-\alpha}.
\end{equation}
By elliptic regularity, see \cite[Theorem 9.11]{GT01}, Sobolev embeddings and a bootstrap argument, using in particular the elliptic equation \eqref{eq:equationFLambda} there exists a positive constant $C>0$ such that 
\begin{align*}\|F_{\lambda}(x,y)\|_{W_{x,y}^{1, \infty}( B(k,2R)\times (-R,R))}
& \leq C \|F_{\lambda}\|_{L^2(B_{d+1}((k,0),5R))}.
\end{align*}
This implies, together with \eqref{eq:Propagation_Smallness_FrequencyCutOff}, that 
\begin{align}
    \|\Pi_{\lambda} f\|^2_{L^2(B(k,R))} & \leq \sup_{x \in B(k,R)} |\Pi_{\lambda}f(x)|^2\notag
    \\
    & \leq C(\sup_{x \in \omega \cap B(k,R)}|\Pi_{\lambda}f(x)|)^{2\alpha} \|F_{\lambda}\|^{2(1-\alpha)}_{L^2(B_{d+1}((k,0),5R))}.
\end{align}
It therefore follows from Young's inequality that there exists $\beta>0$ such that for all $\varepsilon>0$,
\begin{equation*}
    \|\Pi_{\lambda} f\|^2_{L^2(B(k,R))} \leq C \varepsilon^{-\beta} \sup_{x \in \omega \cap B(k,R)}|\Pi_{\lambda}f(x)|^2 +\varepsilon \|F_\lambda\|^2_{L^2(B_{d+1}((k,0),5R))}.
\end{equation*}
By summing over all the integers $k \in \mathbb Z^d$ and using the facts that
$$1 \leq \sum_{k \in \mathbb Z^d} \un_{B(k,R)} \quad \text{and} \quad \sum_{k \in \mathbb Z^d} \un_{B_{d+1}((k,0), 5R)} \leq C(d) \un_{\rr^d \times (-5R, 5R)},$$
for some positive constant $C(d)\geq 1$ depending only on the dimension,
we have for all $\varepsilon>0$,
\begin{multline}\label{eq:Approximative_SpectralEstimate_Euclidean}
    \|\Pi_{\lambda}f\|^2_{L^2(\rr^d)} \leq \sum_{k \in \mathbb Z^d} \|\Pi_{\lambda} f \|^2_{L^2(B(k,R))}\\
    \leq C \varepsilon^{-\beta} \sum_{k \in \mathbb Z^d} \sup_{x \in \omega \cap B(k,R)}|\Pi_{\lambda}f(x)|^2 + \varepsilon C(d) \|F_{\lambda}\|^2_{L^2(\rr^d \times(-5R,5R))}.
\end{multline}
By now, let us show that there exists a positive constant $C>0$ such that 
\begin{equation}\label{eq:H1estimation_FrequencyCutOff}
\|F_{\lambda} \|^2_{L^2(\rr^d \times (-5R, 5R))} \leq C e^{C\sqrt \lambda} \|\Pi_{\lambda} f\|^2_{L^2(\rr^d)}.
\end{equation}
Indeed,  by using the fact that there exists a positive constant $C>0$ such that 
$$ \frac{\sinh(yt)^2}{t^2} \leq Ce^{C \sqrt \lambda}\qquad\forall y \in (-5R, 5R), \forall t \in (0, \sqrt \lambda),$$ we have 
\begin{align*}  & \|F_\lambda\|^2_{L^2(\rr^d \times (-5R, 5R))} &\leq \int_{-5R}^{5R} \int_{\rr^d} \left|\frac{\sinh\left(y \sqrt{H_{g, V, \kappa}}\right)}{\sqrt{H_{ g, V, \kappa}}} \Pi_{\lambda} f(x)\right|^2 \leq 10 R Ce^{C \sqrt \lambda} \|\Pi_{\lambda} f\|^2_{L^2(\rr^d)}.
\end{align*}

Finally, we have shown that \eqref{eq:H1estimation_FrequencyCutOff} holds for some constant $C>0$. It follows from \eqref{eq:Approximative_SpectralEstimate_Euclidean} with $\varepsilon= \frac{1}{C} e^{-C \sqrt \lambda}$ and \eqref{eq:H1estimation_FrequencyCutOff} that 
\begin{equation*}
    \|\Pi_{\lambda} f\|^2_{L^2(\rr^d)} \leq C e^{C \sqrt \lambda} \sum_{k \in \mathbb Z^d} \sup_{x \in \omega \cap B(k,R)} |\Pi_{\lambda} f(x)|^2.
\end{equation*}
 This ends the proof of Theorem \ref{thm:spectralRd}.

\bibliographystyle{alpha}
\small{\bibliography{SpectralEstimates}}

\end{document}